\newtheorem{theorem}{Theorem}[section]
\newtheorem{corollary}[theorem]{Corollary}
\newtheorem{lemma}[theorem]{Lemma}
\theoremstyle{definition}
\newtheorem{definition}[theorem]{Definition}
\theoremstyle{remark}
\newtheorem{remark}[theorem]{Remark}
\theoremstyle{definition}
\newtheorem{example}[theorem]{Example}
\numberwithin{figure}{section}
\begin{document}
\title{Homology of framed links embedded in thickened surfaces}
\author{Jeffrey Boerner\footnote{partially supported by the University of Iowa Department of Mathematics NSF VIGRE grant DMS-0602242}\\
University of Iowa\\
jboerner@math.uiowa.edu}
\renewcommand{\today}{}
\maketitle

\begin{abstract}  We construct an infinite family of homology theories of framed links in thickened surfaces, as well as a homology theory whose graded Euler characteristic is exactly the Kauffman bracket of the link in the surface.  Both theories are based on ideas coming from Asaeda, Przytycki and Sikora's categorification of the Kauffman bracket skein module of I-bundles over surfaces.  This is accomplished by borrowing ideas from Bar-Natan's Khovanov homology theory for tangles and cobordisms and using embedded surfaces to generate the chain groups, instead of diagrams.

\end{abstract}

\section {Introduction}


In \cite{K} Khovanov introduced a homology theory for links in $S^3$ that was a categorification of the Jones polynomial.  Asaeda, Przytycki and Sikora extended this theory to links embedded in I-bundles in \cite{APS} .  Their homology theory incorporated some of the topology of the I-bundle into their invariant.  


In \cite{B} the homology theory from \cite{APS} for I-bundles over orientable surfaces was constructed using embedded surfaces instead of decorated diagrams.  The theories constructed in this article modify this construction.


Section two of the paper introduces preliminary definitions and observations that pertain to all of the homology theories.  In section three the infinite family of homology theories are introduced, one for each choice of $k\in \mathbb{N} \cup \{\infty \}$.  Section four introduces the simple homology theory whose graded Euler characteristic is exactly the Kauffman bracket of the link in the surface and section five proves invariance of all of the homology theories.

\section {Definitions}

In this section we will define the elements of the chain groups for the two homology theories.  Each theory builds on these definitions slightly differently.

\begin{definition}

Let $S$ be a surface properly embedded in a 3-manifold $N$.  A boundary circle of $S$ is said to be \textbf{inessential} if it bounds a disk in $N$, otherwise it is said to be \textbf{essential}.

\end{definition}

\begin{definition}\label{comp}

If $S$ is an oriented surface and $c$ is an oriented boundary component of $S$ then the orientation of $S$ is \textbf{compatible} with the orientation of $c$ if the boundary orientation of $c$ from $S$ agrees with the orientation of $c$.    Two oriented boundary curves of an orientable connected surface are \textbf{compatible} if both curves are compatible with the same orientation on the surface.

\end{definition}

\begin{definition}
A \textbf{surface link diagram} is an orientable surface together with a link diagram in the surface.  The orientable surface will be referred to as the \textbf{underlying surface}.

\end{definition}

\begin{definition}

A \textbf{state} of a surface link diagram $D$ is a choice of smoothing at each crossing.  Therefore a state is represented by a collection of disjoint simple closed curves in the underlying surface of $D$.

\end{definition}

\begin{definition}

Let $D$ be a surface link diagram with underlying surface $F$.  A \textbf{pre-foam} with respect to $D$ has the following properties:

\begin{itemize}

\item A pre-foam is a compact surface properly embedded in $F \times I$.
\item A pre-foam has a state of $D$ as its boundary in the top ($F \times $\{0\}) and essential oriented circles as its boundary in the bottom ($F$ x \{1\}).  There are no other boundary components.

\item Pre-foams may be marked with dots.

\end{itemize}

Two pre-foams are equivalent if they are isotopic relative to the boundary.  The dots on pre-foams are allowed to move freely within components but dots may not switch components.  

\end{definition}

\begin{definition}Let $M_D$ be the free $\mathbb{Z}$-module generated by pre-foams with respect to the surface link diagram $D$.\end{definition}

\medskip

\begin{definition} Let \textbf{$B$} be the submodule of $M_D$ generated by the following relations: 
\end{definition}

\begin{enumerate}

\item The neck-cutting relation. (NC)
\begin{center}
\includegraphics[height=1 in]{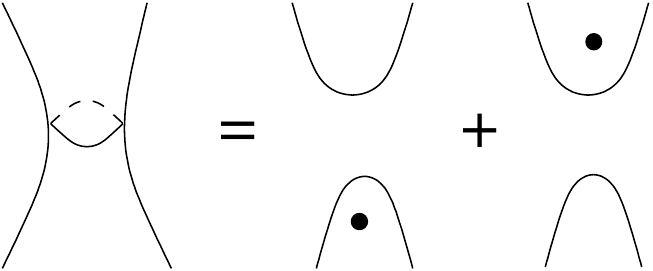}
\end{center}

\item A sphere bounding a ball equals zero. (SB)
\begin{center}
\includegraphics[height=1 in]{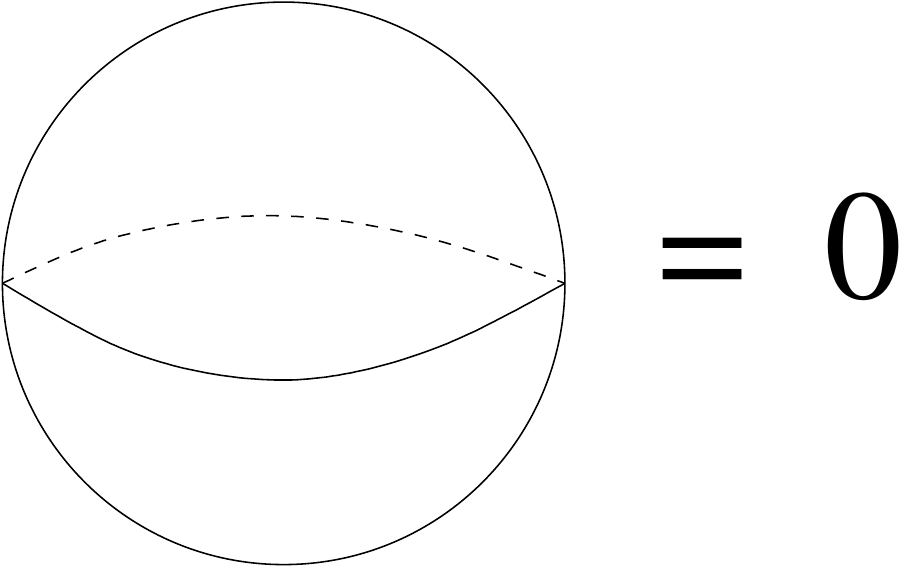}
\end{center}

\item A sphere with a dot bounding a ball equals one. (SD)
\begin{center}
\includegraphics[height=1 in]{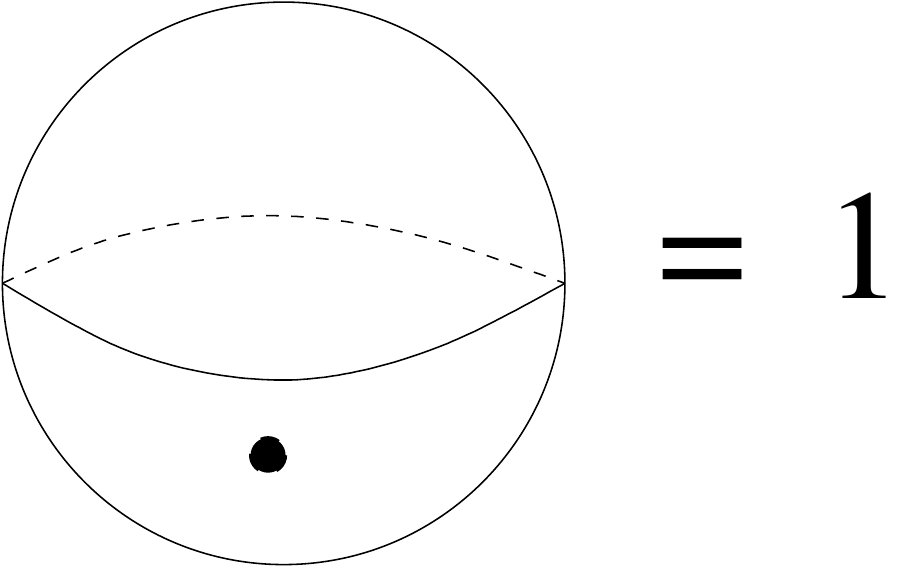}
\end{center}

\item A component with two dots equals zero. (TD)
\begin{center}
\includegraphics[height=.6 in]{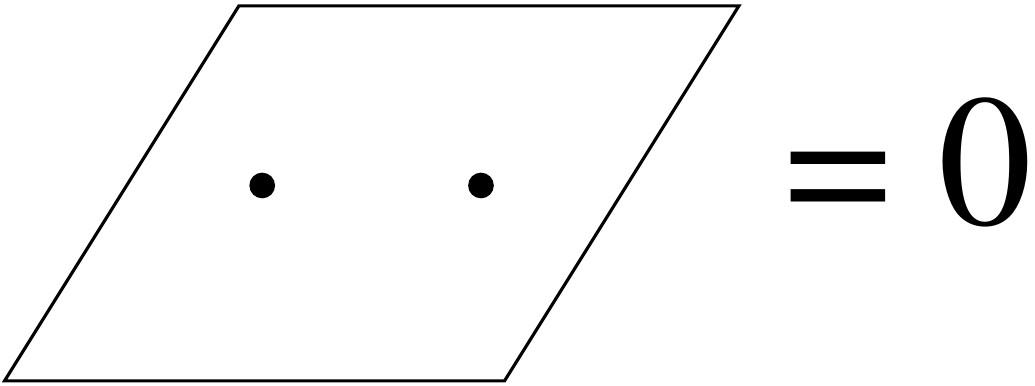}
\end{center}

\item A surface with a non-disk, non-sphere incompressible component, that has a dot on that component equals zero.(NDD)

\end{enumerate}

\begin{definition}
Elements of $M_D/B$ are called \textbf{foams} with respect to the diagram $D$.
\end{definition}

\subsection{Observations about foams}

We will now make some observations about foams that will be useful later. 

\begin{lemma}
\label{nontrivdot}
A foam with a dotted component that has an essential boundary component is trivial in the quotient.

\end{lemma}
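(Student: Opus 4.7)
The plan is to induct on the complexity $c(S) = 2g(S) + b(S) - 2 = -\chi(S)$ of the dotted component $S$ that carries an essential boundary circle. Since $S$ has an essential boundary, it cannot be a disk (whose unique boundary bounds the disk itself and is therefore inessential) nor a sphere (which has no boundary at all); hence $c(S) \geq 0$. If in addition $S$ is incompressible in $F \times I$, then $S$ is a non-disk, non-sphere, incompressible, dotted component, and relation (NDD) forces the entire foam to be zero. This handles the base case.

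For the inductive step, suppose $S$ admits a compressing disk $D \subset F \times I$, so $\partial D \subset S$, $D \cap S = \partial D$, and $\partial D$ is essential in $S$. Applying the neck-cutting relation (NC) along $D$ expresses the foam as a combination of two foams $F_0$ and $F_1$; in each $F_i$ the surface $S$ is replaced by its compression $S'$ (cut along $\partial D$ and capped with two disks), and one of the two new caps carries an extra dot. If $D$ is non-separating, $S'$ is connected and in each $F_i$ it inherits both the original dot and the NC dot, so relation (TD) kills both $F_0$ and $F_1$.

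If instead $D$ is separating, then $S' = A \sqcup B$. Because $\partial D$ is essential in $S$, neither piece of $S$ cut along $\partial D$ is a disk, and each piece retains original boundary components of $S$, so neither $A$ nor $B$ is a sphere. Without loss of generality the original dot of $S$ lies on $A$. In the term where the NC dot lies on the cap in $A$, the component $A$ carries two dots and (TD) annihilates that term. In the remaining term $A$ has one dot and $B$ has one dot. The essential boundary circle of $S$ is unaffected by the compression (which modifies $S$ only in a neighborhood of $\partial D$, leaving $F \times I$ fixed), so it survives as an essential boundary circle of whichever piece $A$ or $B$ contains it; that piece is then a dotted component with essential boundary. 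Since $\chi(A) + \chi(B) = \chi(S) + 2$ and neither piece is a sphere or disk in the non-trivial case (the degenerate subcase where $A$ or $B$ is a disk gives complexity $-1 < c(S)$), we have $c(A), c(B) < c(S)$ and the inductive hypothesis finishes the argument. The symmetric case, where the original dot lies on $B$, is handled identically with $F_0$ and $F_1$ swapped.

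The main obstacle is the bookkeeping in the separating subcase: one must simultaneously track the original dot, the newly placed NC dot, and which of $A$, $B$ retains the essential boundary circle, then verify that the complexity strictly decreases under every admissible compression so that the induction is well-founded.
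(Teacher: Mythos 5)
Your proof is correct and takes essentially the same route as the paper's: NDD handles the incompressible case, NC plus TD handles a non-separating compression, and a separating compression reduces to a smaller dotted component with essential boundary, with termination guaranteed because the Euler characteristic is bounded (you phrase this as induction on $-\chi$, the paper as the finiteness of NC applications). One small slip: the claim that each piece ``retains original boundary components of $S$'' need not hold, but the non-sphere conclusion already follows from your earlier observation that neither piece of $S$ cut along $\partial D$ is a disk, so nothing breaks.
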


\begin{proof}
If the dotted component is incompressible we are done by the NDD relation.  If not, then apply the NC relation to this component.  If the compressing disk was non-separating, then the result of compressing is a component with two dots, thus it is trivial in the quotient by the TD relation.

If the compressing disk is separating the result of the NC relation is two components. One of them has an essential boundary curve since the NC relation does not affect boundary components.  Therefore one of the components is not a disk, but they both have dots.

We are now left with two components, each with dots, and one of them has an essential boundary component. 

Note the component that has an essential boundary curve and a dot satisfies the assumptions of the Lemma.  Thus we may continue in this manner.  Note that each time the NC relation is applied the Euler characteristic goes up by two. Since we are dealing with compact surfaces the Euler characteristic is bounded above,  thus the NC relation can only be applied a finite number of times.  When the NC relation can no longer be applied there is an incompressible component with a dot that is not a disk or a sphere.

\begin{center}
\includegraphics[height = 2 in]{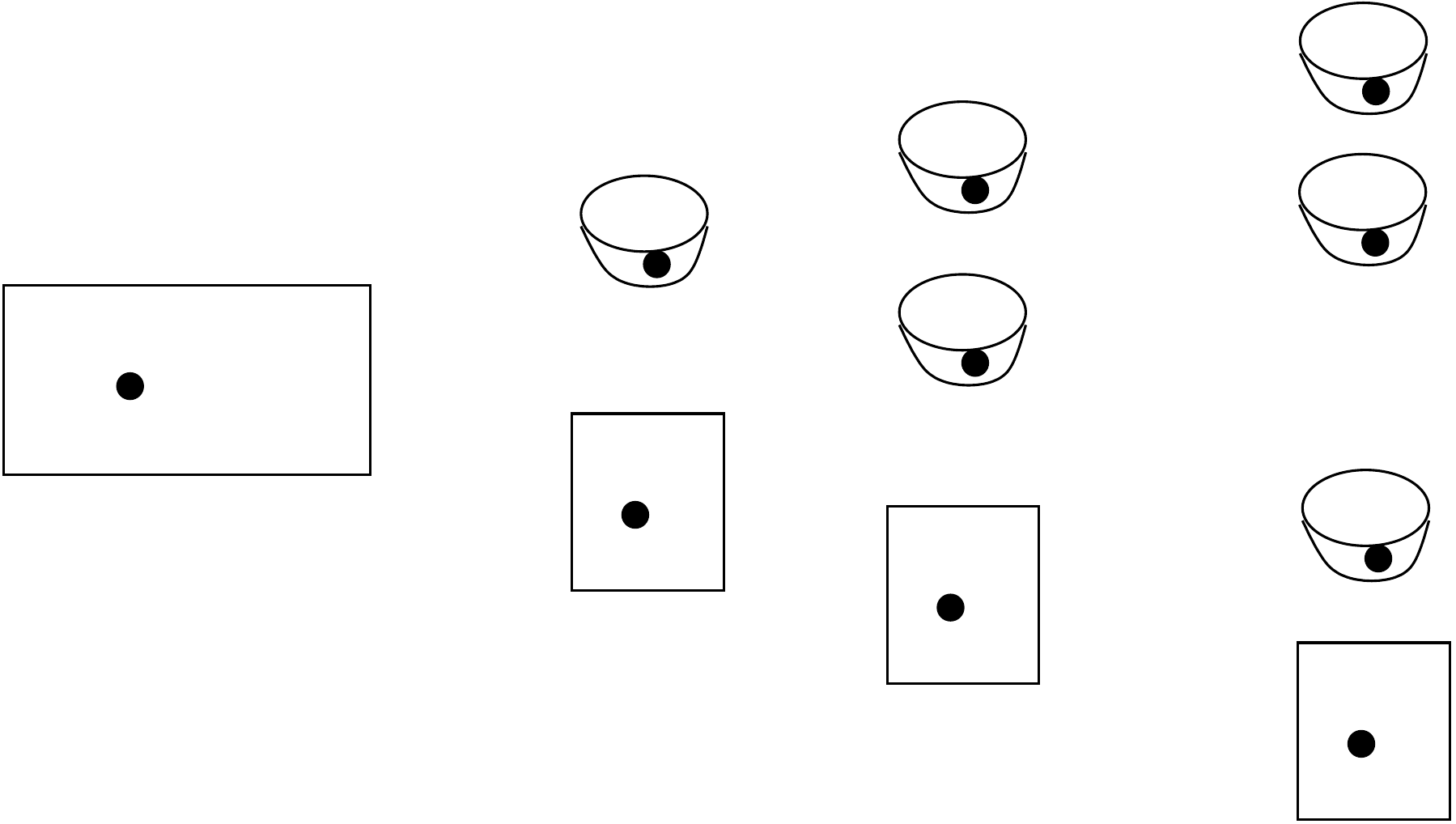}\put(-180,80){$=$} \put(-115,80){$=$} \put(-70,80){$=\dots=$}\put(-15,80){$\vdots$}\put(-240,10){Incompressible, non-disk and non-sphere $\rightarrow$}     
\end{center}

It can be seen in the figure that the original surface is equal to a foam that has an incompressible non-disk and non-sphere component with a dot, which is trivial in the quotient by the NDD relation.

\end{proof}

\begin{lemma}
\label{closedcomp}

Foams with closed dotted non-sphere components are trivial.

\end{lemma}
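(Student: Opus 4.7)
The plan is to mirror the strategy of Lemma~\ref{nontrivdot}, with the ``closed and of positive genus'' condition playing the role that essential boundary played there. Fix a closed dotted non-sphere component $S$ of the foam.

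If $S$ is incompressible then, being closed, it is neither a disk nor (by hypothesis) a sphere, so the NDD relation applies immediately and the foam is zero. Otherwise $S$ is compressible and I would apply the NC relation along a compressing disk. If that disk is non-separating, compression yields a single closed surface of smaller genus whose unique component carries the original dot together with an NC-added dot, producing two dots, which is killed by TD. If the compressing disk is separating, compression yields two closed components whose genera sum to $g(S)\ge 1$; both inherit dots exactly as in the separating analysis of Lemma~\ref{nontrivdot}, and because $g_1+g_2\ge 1$ at least one of the two pieces is itself a closed dotted non-sphere component, to which the hypothesis of the present lemma reapplies.

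I would then iterate on this new qualifying component. Exactly as in Lemma~\ref{nontrivdot}, each NC application raises the Euler characteristic by two while the Euler characteristic of a compact surface is bounded above, so the process must terminate, landing on an incompressible closed non-sphere dotted component that is trivial by NDD. The one step requiring care is the separating case: I must be sure that a closed dotted non-sphere component really does survive the cut so that the recursion can continue. This is precisely where the non-sphere hypothesis is used, via the elementary observation that two nonnegative genera summing to at least one cannot both vanish.
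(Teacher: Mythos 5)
Your proposal is correct and follows essentially the same route as the paper: NDD handles the incompressible case, NC plus TD handles a non-separating compression, a separating compression feeds a closed dotted non-sphere piece back into the same statement, and termination comes from the Euler characteristic rising by two at each NC step. Your genus-sum remark is a slightly different (and adequate) justification of the point the paper simply asserts, namely that the separating cut still leaves a closed non-sphere dotted component; in fact neither piece can be a sphere, since the compressing curve is essential in the surface.
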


\begin{proof}

If the closed component is incompressible and it has a dot then is trivial in the quotient, by the NDD relation.  

If the closed component is compressible, then the NC relation can be applied.  If the compressing disk is non-separating then after the NC relation is applied the new closed component has two dots and is thus trivial in the quotient.

If the compressing disk is separating then we are left with a sum of closed non-sphere surfaces, one of which has a dot in each summand.  Note that each summand satisfies the assumptions of this Lemma.  Since we are dealing with compact foams and the Euler characteristic is bounded above the process will eventually terminate, as in Lemma \ref{nontrivdot} .

\end{proof}

\begin{corollary}
\label{esscompr}
If a foam has a non-torus, non-sphere component that is compressible with no inessential boundary curves then it is trivial in the quotient.
\end{corollary}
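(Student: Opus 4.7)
The plan is to apply the neck-cutting relation (NC) to $C$ along a compressing disk, which manufactures a dot and reduces the statement to the two preceding lemmas. The corollary's hypothesis does not itself supply a dot on $C$, so the NC step is essential: it produces the dot needed to invoke Lemma \ref{nontrivdot} and Lemma \ref{closedcomp}.

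To set up, I would fix a compressing disk $D$ for $C$ with $\alpha = \partial D$ essential on $C$. Applying NC along $D$ rewrites the foam as a sum of two terms, in each of which $C$ is replaced by the surface obtained by cutting along $\alpha$ and capping both new circles with parallel copies of $D$, together with a dot placed on one of the two cap disks. Then I would split on whether $\alpha$ is separating on $C$.

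If $\alpha$ is non-separating, the replacement $C'$ is connected, has the same boundary as $C$, has genus one less, and carries a dot. When $C$ has nonempty boundary, that boundary is essential by hypothesis, so Lemma \ref{nontrivdot} kills both summands. When $C$ is closed, the non-torus and non-sphere hypotheses force $C$ to have genus at least two, so $C'$ is a closed surface of positive genus carrying a dot, and Lemma \ref{closedcomp} finishes.

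If $\alpha$ is separating, the NC surgery yields two components $C_1, C_2$, with the dot on one of them, say $C_1$. The boundary of $C_1$ is a subset of that of $C$, hence still essential. If $C_1$ has any boundary, Lemma \ref{nontrivdot} applies. Otherwise $C_1$ is closed, and the main obstacle appears: one must rule out that $C_1$ is a sphere, since a sphere would escape Lemma \ref{closedcomp}. This is handled by observing that $\alpha$, being a compressing curve, is essential on $C$, so neither side of $C \setminus \alpha$ is a disk, and therefore neither capped component is a sphere. Lemma \ref{closedcomp} then finishes the argument.
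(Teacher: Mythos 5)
Your proof is correct and takes essentially the same route as the paper: apply NC along a compressing disk to manufacture a dot, then reduce to Lemmas \ref{nontrivdot} and \ref{closedcomp}. The paper's own argument is much terser (it simply asserts that after NC every piece is either closed of nonzero genus or has essential boundary, one dotted in each summand), whereas you spell out the separating/non-separating case split and in particular verify that no sphere can arise because $\alpha = \partial D$ is essential on $C$ -- exactly the detail the paper leaves implicit.
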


\begin{proof}

Since the component is compressible the NC relation can be applied.  After the NC relation is applied the only surfaces present are closed surfaces of non-zero genus, or surfaces with essential boundary curves.  One of these has a dot in each summand and thus the surface is trivial in the quotient by Lemma \ref{nontrivdot}.

\end{proof}

\section{$k$-Homology}

The first homology theory presented is actually an infinite family of homology theories.  There is one theory for each choice of $k\in \mathbb{N} \cup \{\infty\}$.  We must define conditions on orientations of the boundary components of foams and additional relations for these theories.

\begin{definition}
A foam is \textbf{$F$-oriented} if it satisfies two conditions:

\begin{enumerate}

\item Inessential boundary curves of state surfaces are not oriented, but essential boundary curves may or may not be oriented.  However, essential curves in the bottom must be oriented.

\item If one component of a state surface has an oriented essential boundary curve, then all essential boundary curves on that component must be oriented compatibly with respect to Definition \ref{comp}.

\end{enumerate}

\end{definition}

\begin{definition}

$B_k$ is a submodule of $M/B$ generated by the relations:

\begin{enumerate}

\item If $k \neq \infty$ then an incompressible component with Euler characteristic less than $-k$ is zero. (KEC)

\item An incompressible non-orientable component is zero.  (NOS)

\end{enumerate}

\end{definition}

\begin{definition}
\textbf{$k$-foams} with respect to the surface link diagram $D$ are $F$-oriented elements of $M_D/(B \cup B_k)$.
\end{definition}

\subsection{Chain Groups and Grading}

\begin{definition}

Let $S$ be a $k$-foam.

$p(S) = \#$ of positive smoothings in the state corresponding to the top boundary of $S$,

$n(S) = \#$ of negative smoothings in the state corresponding to the top boundary of $S$,

$I(S) = p(S) - n(S)$,

$J(S) = I(S) + 2(2d  - \chi(S))$  where $d$ is the number of dots on $S$ and $\chi(S)$ is the Euler characteristic of $S$.

\end{definition}

\medskip

The collection of $k$-foams are tri-graded. The first two gradings are $I$ and $J$ and the third index corresponds to the oriented essential disjoint simple closed curves in the bottom of $F \times I$.  Note that given two parallel oriented simple closed curves on a surface it is not difficult to determine if their orientations agree or disagree.  To define the third index it is necessary to specify that one of the orientations possible is the positive one and the opposite orientation is the negative one.  Thus for each homotopy class of simple closed curve we assume we have chosen a positive orientation and a negative orientation.

\begin{definition}
Let $\gamma_1,\dots,\gamma_n$ be a family of disjoint simple closed curves in the bottom of a $k$-foam $S$.    If $\gamma_i$ and $\gamma_j$ are parallel then $\gamma_i=\gamma_j$.

Then 

$K(S) = \sum_{i=1}^n k_i\gamma_i$, where 

$$   
k_i  = \left\{ 
\begin{array}{ccc}
1,&\mbox{ if } \text{$\gamma_i$ is oriented in the positive direction}                  \\
-1,&\mbox{ if } \text{$\gamma_i$ is oriented in the negative direction}                  \\
\end{array}\right.
$$

\end{definition}

\medskip

In order for these definitions to make sense they need to be well defined and respect the relations on $k$-foams.  Thus we have the following Lemma:

\begin{lemma}
The indices $I(S)$, $J(S)$ and $K(S)$ are well defined for a k-foam $S$.
\end{lemma}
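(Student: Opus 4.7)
The plan is to check that each of the three indices is preserved by every defining relation of $k$-foams, that is, by the five relations generating $B$ together with the two generating $B_k$. Since the chain groups are given as a quotient $M_D/(B\cup B_k)$, well-definedness of a grading amounts to verifying that whenever a relation equates two foams (or a foam and $0$), both sides lie in the same tri-degree.

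For $I(S)$, the situation is immediate: $I$ depends only on the state on the top boundary $F\times\{0\}$, and none of the relations NC, SB, SD, TD, NDD, KEC, NOS alter the top boundary (they are all local modifications in the interior or on closed or bottom-bounded pieces). Hence $I$ passes to the quotient. For $K(S)$, exactly the same argument applies to the bottom boundary: each relation leaves the oriented essential curves in $F\times\{1\}$ untouched, so $K$ is preserved. (The $F$-orientation axiom ensures these bottom curves come with well-defined signs to begin with.)

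The only nontrivial check is for $J(S) = I(S) + 2(2d - \chi(S))$. Since $I$ is already well-defined, it suffices to show that $2d-\chi$ is preserved by the relations. The ``$=0$'' relations (SB, TD, NDD, KEC, NOS) impose no constraint, since $0$ lies in every grading. For SD, a dotted sphere bounding a ball is set equal to the empty surface, i.e.\ the unit $1$; here $2(1)-2 = 0 = 2(0)-0$, so the two sides agree in degree. For NC, cutting a compressing neck and capping off with two disks (one of which carries an extra dot, summed over the two choices of which side gets the dot) replaces a term of Euler characteristic $\chi$ and dot count $d$ with terms of Euler characteristic $\chi+2$ and dot count $d+1$; then $2(d+1)-(\chi+2) = 2d-\chi$, as required.

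Putting these together shows that $I$, $J$, and $K$ descend to the quotient defining $k$-foams and are therefore well-defined. No single step is genuinely difficult; the only thing to be careful about is bookkeeping the effect of NC on $\chi$ and $d$ simultaneously, which is the reason the factor $2$ appears in front of $2d-\chi$ in the definition of $J$.
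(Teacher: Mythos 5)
Your proposal is correct and follows essentially the same route as the paper: $I$ and $K$ are untouched because the relations do not alter the top or bottom boundary curves, and $J$ is preserved because NC raises $\chi$ by two while adding one dot to each summand, and SD removes a component with $2d-\chi=0$. Your explicit remark that the relations setting foams equal to zero impose no constraint is a small organizational clarification, but not a different argument.
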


\begin{proof}
Showing the indices $I(S)$ and $K(S)$ are well-defined is immediate since the circles in the top and bottom are not affected by the relations on $k$-foams.

In order to show the index $J(S)$ is well defined we need to consider the NC relation and the SD relation.

To consider the NC relation, first note that when NC is applied the Euler characteristic goes up by two.  Also, each summand adds a dot, thus $2d  - \chi(S)$ remains the same.  

A sphere has Euler characteristic two, and note if the sphere has a dot, then $2d-\chi(S) = 2 -2 =0$, so removing a sphere with a dot does not affect $J(S)$.

\end{proof}

\medskip

\begin{definition}Let $C_{i,j,s}(D)$ be the free module generated by all $k$-foams with respect to the diagram $D$, $S$, such that  $I(S)=i$, $J(S)=j$ and $K(S)=s$.
\end{definition}

\subsection {The Boundary Operator}

Let $p$ be a crossing of the link diagram $D$.  

\begin{definition}

Let $S$ be a $k$-foam.  \textbf{Placing a bridge at the $p^{\text{th}}$ crossing} of $S$ is depicted in Figure \ref{placebridge}.

\begin{figure}[h!]

\begin{center}
\begin{tabular}{ccc}
 \includegraphics[height=1 in]{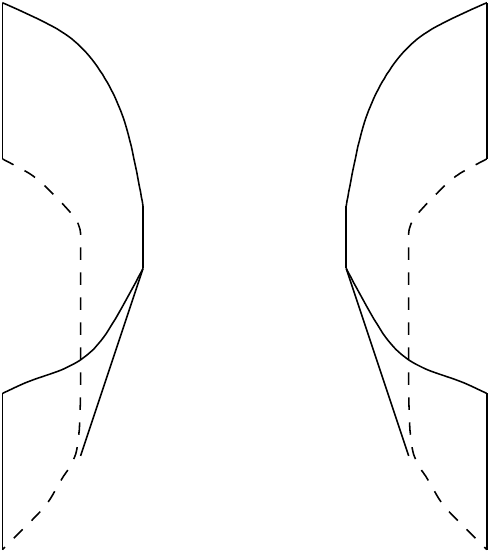} & \hspace{.2 in} \raisebox{.4 in}{$\mapsto$} \hspace{.1 in} &\includegraphics[height=1 in]{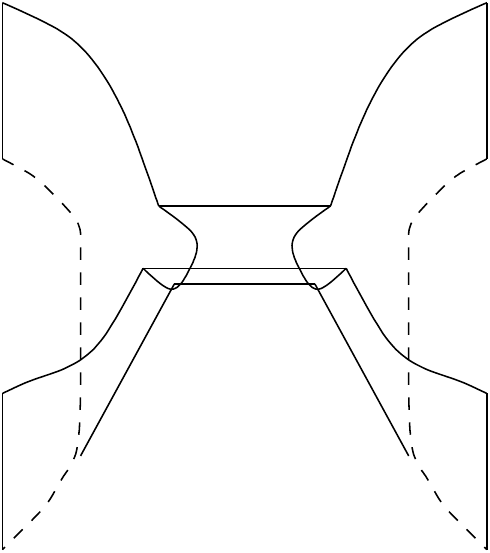} \\

 $S$ at $p^{\text{th}}$ crossing & &   After bridge is placed
\end{tabular}
\caption{Placing a bridge at the $p^\text{th}$ crossing     \label{placebridge}}
\end{center}
\end{figure}

\end{definition}

\bigskip

\begin{remark}
If a bridge is placed between two subsurfaces with oriented curves, the orientations may conflict.  Since all curves on a component must be oriented compatibly we can consider each component to be oriented in the way compatible with its boundary curves.  If no boundary curves are oriented, the component can be considered unoriented.
\end{remark}

\begin{definition}

When a bridge is placed at the $p^{\text{th}}$ crossing of a $k$-foam $S$, the orientation from one side of the bridge can be slid along the bridge to the other side of the bridge.  If the orientation that is slid across the bridge disagrees with existing orientation, then \textbf{EO} is said to occur at the $p^{\text{th}}$ crossing at $S$.

\begin{figure}[h!]
\label{EOpic}
\begin{center}
\includegraphics[height =1.5in]{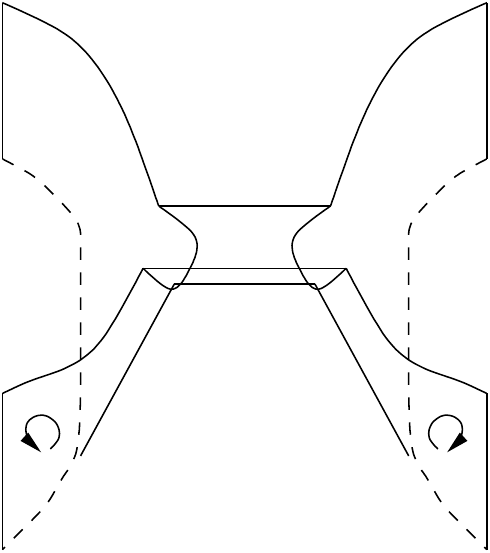}
\caption{The is an example of when EO occurs.}
\end{center}
\end{figure}
\end{definition}

\begin{definition}

If EO does not occur at the $p^{\text{th}}$ crossing, then the components must be oriented compatibly.  \textbf{$d'_p(S)$} is $S$ but with a bridge placed at the $p^{\text{th}}$ crossing.   In addition, $d'_p(S)$ is oriented so that all essential curves are oriented compatibly with the existing orientation on the components and inessential components are unoriented.

\end{definition}

Now we are able to define the boundary operator of the chain complex.

\begin{definition}
$$
d_p(S) = \left\{ 
\begin{array}{ccc}
0,&\mbox{ if } \text{$p^{\text{th}}$ crossing of $S$ is smoothed negatively}                  \\
0,&\mbox{ if } \text{(EO) occurs}                  \\
\text{$d'_p(S)$}, &\mbox{ else }
\end{array}\right.
$$

Then we define the boundary operator $d: C_{i,j,s}(D) \rightarrow C_{i-2,j,s}(D)$, by 

\begin{equation}d(S) = 
\sum_{p \text{ a crossing of $D$}} (-1)^{t(S,p)}d_p(S),\end{equation}
where $t(S,p) = |$\{ $j$ a crossing of $D$ : $j$ is after $p$ in the ordering of crossings and $j$ is smoothed negatively in boundary state of S\}$|$

\end{definition}

\begin{example}

It may seem that the order of applying $d_p$ and applying the NC relation may affect the orientation on the boundary curves.  Consider the $k$-foam in Figure \ref{orientexamp}.  If a bridge is placed at the dotted line, orientation is forced on any new essential curves.  However, if the NC relation was applied before placing the bridge no orientation would be forced.  Notice after applying the NC relation one component has a dot in each summand and they have essential curves, so they are trivial $k$-foams.  Lemma \ref{ncwelldef} shows that this is what happens in general.

\begin{figure}[h!]
\begin{center}
\includegraphics[height=1.5 in]{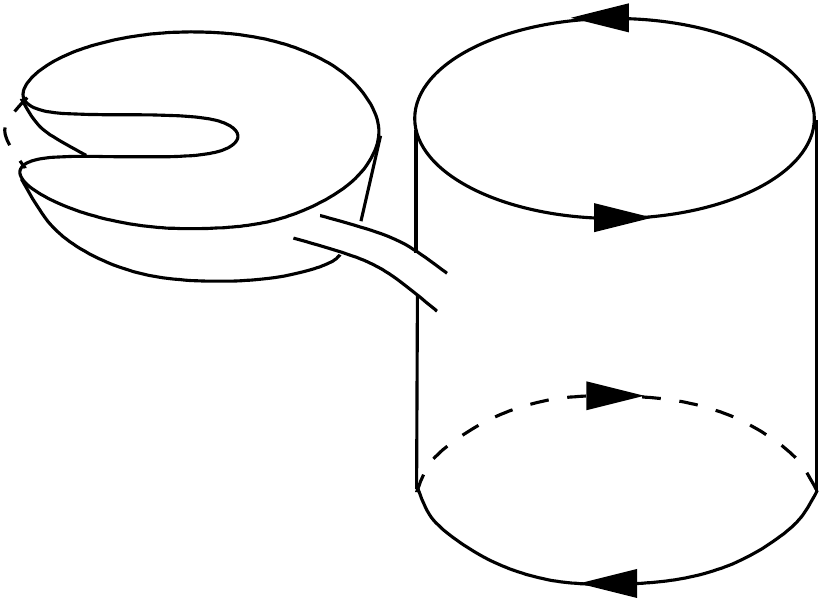}
\caption{A $k$-foam where the order of applying the neck-cutting and applying the boundary operator affects the orientation. \label{orientexamp}}
\end{center}
\end{figure}

\end{example}

\begin{lemma}
\label{ncwelldef}

The boundary operator together with the NC relation is well-defined with regard to orientations of $k$-foams, i.e. orientations of curves agree after applying the boundary operator regardless of when the NC relation is applied.

\end{lemma}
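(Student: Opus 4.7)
The plan is to compare the two orders of operation directly and show that any summand in which the orientation assignments could differ is already trivial in the quotient, so the only surviving terms are those in which the operations commute up to orientation.

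First, I would reduce to the case where the compressing disk used in the NC relation and the bridge attached at the $p^{\text{th}}$ crossing interact non-trivially. If the compressing disk and a neighbourhood of the bridge can be isotoped into disjoint subsurfaces of $S$, then the two operations act on disjoint pieces of the foam and the induced orientations on essential curves in the result are determined independently, so they match in either order. This lets me focus on the case in which the compressing disk lives on (or adjacent to) the component of $S$ that is modified by $d_p$.

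Next, I would split on whether the NC compression is non-separating or separating. In the non-separating case, after NC the compressed component carries two dots; placing the bridge afterwards does not change this (dots move freely within a component and a bridge on the single compressed component leaves it connected with two dots), so by the TD relation this entire branch is zero and no orientation question arises. In the separating case, NC produces two components, each with a dot. By Lemma \ref{nontrivdot}, any summand in which one of these dotted components has an essential boundary curve vanishes. The only surviving summands are therefore those in which both new dotted components have only inessential boundary, and then there are no essential curves on the dotted side that could receive an orientation forced by the later bridge operation, so there can be no conflict. A parallel analysis applied to the other order (bridge first, then NC of the corresponding compressing disk in the bridged foam) selects exactly the same surviving summands, and the orientations on essential curves in those summands are inherited from the original orientation of $S$ in the same way in both orders.

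The main obstacle will be the subcase in which the bridge footprint sits on the ``dotted side'' of the separating NC: here I need to check that placing the bridge neither creates a new essential boundary curve on a dotted component (which would revive Lemma \ref{nontrivdot} and kill the term) nor alters the orientation propagation across the bridge in a way that disagrees with the NC-first computation. This reduces to a local picture, analogous to the example in Figure \ref{orientexamp}, where one verifies that the compressing disk can be chosen so the bridge is attached entirely to the non-dotted component, so that the orientation slid across the bridge is exactly the one prescribed by $d_p$ applied directly to $S$. Once this local model is established, the argument is just bookkeeping of Definition \ref{comp} compatibility on each component.
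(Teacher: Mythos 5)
Your overall strategy---show that every summand in which the two orders could assign different orientations is already trivial in the quotient---is essentially the paper's: there, one observes that orientation can only be forced through the neck when both sides of a separating compression carry essential boundary curves, and then the dotted component in each NC summand has an essential boundary curve and dies by Lemma \ref{nontrivdot}. However, two of your steps fail as written. First, in the non-separating case you claim the compressed component carries two dots and is killed by TD. That is only true when the component already had a dot; in this lemma the component is arbitrary, and a non-separating compression produces, in each summand, a still-connected component with a \emph{single} new dot, which is not trivial in general (a compressible once-punctured torus with inessential boundary compresses to a dotted disk, for instance). The case is indeed harmless, but for a different reason: a non-separating compression leaves the component connected and leaves its boundary untouched, so the compatibility constraints of Definition \ref{comp}, and hence the orientation assigned by $d'_p$, are the same whether NC is applied before or after the bridge; you need that observation rather than an appeal to TD.

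Second, in the separating case your dot bookkeeping is off: the NC relation puts the new dot on one side in one summand and on the other side in the other summand, so you cannot ``choose the compressing disk so the bridge is attached entirely to the non-dotted component''---neither the disk (it is given by the relation instance being checked) nor the dot placement is at your disposal. The correct resolution of your ``obstacle'' subcase is the one you mention only parenthetically: if the bridge creates or meets an essential boundary curve on the dotted side, that summand is trivial by Lemma \ref{nontrivdot} irrespective of orientation labels; and since a genuine orientation conflict requires essential boundary curves on \emph{both} sides of the separating neck (the oriented curve on one side, the curve whose orientation is in dispute on the other), in the conflict situation both summands are trivial---which is exactly the paper's short argument. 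Finally, your opening reduction should say ``the compressing disk lies on a different connected component from the bridge,'' not merely that they sit in disjoint subsurfaces: Figure \ref{orientexamp} is a disjoint-but-same-component configuration in which the two orders really do produce different orientations and are reconciled only by triviality.
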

\begin{proof}

Note that on the left side of the NC relation boundary orientation from essential boundary circles are forced on other essential boundary circles through the neck.  However, on the right side of the relation all essential boundary curves do not need to be compatible with one another since they may no longer lie on the same connected component.

\begin{center}
\includegraphics[height = .8 in]{BNrel.pdf}
\end{center}

Orientation can only be forced in the left side of the equation if each component has an essential boundary component on the right-hand side.  Note one component has a dot in each summand, so by Lemma \ref{nontrivdot} both sides are trivial in the quotient.  Thus orientations may differ, but the foams are trivial, and therefore equal in the quotient.  

\end{proof}

\begin{remark}

Here are two observations that are helpful in the following proofs:

\begin{enumerate}

\item Bridging an essential boundary curve to itself can produce at most one inessential boundary curve at a time.  This is due to the fact that if two inessential boundary curves are created from one curve, then the original curve was also inessential.

\item If a component has an inessential boundary component then either this component is a disk or it is compressible. (Just push the disk the curve bounds into $F \times I$ to obtain a compressing disk.)

\end{enumerate}

\end{remark}

\begin{theorem}
\label{bopwelldef}
The boundary operator, $d$, is well defined on $k$-foams.
\end{theorem}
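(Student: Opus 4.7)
The plan is to verify three things: (1) that $d$ shifts the tri-grading correctly, sending $C_{i,j,s}(D)$ to $C_{i-2,j,s}(D)$; (2) that $d$ descends to the quotient $M_D/(B \cup B_k)$; and (3) that the $F$-orientation conditions are respected and the signed total $d = \sum_p (-1)^{t(S,p)} d_p(S)$ does not depend on choices.

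First I would handle the grading. Placing a bridge at the $p^{\text{th}}$ crossing changes a positive smoothing into a negative one, so $p(S)$ drops by $1$, $n(S)$ rises by $1$, and $I$ drops by $2$. A bridge is a $1$-handle attached to the state surface, so $\chi$ drops by $1$ while the dot count $d$ is unchanged; hence $2(2d-\chi)$ rises by $2$ and $J$ is preserved. The bottom boundary of the pre-foam is untouched by $d_p$, so $K(S)$ is preserved as well. Thus each $d_p$ has the correct tri-degree.

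Next I would show that $d_p$ descends to the quotient by checking each generating relation of $B \cup B_k$ in turn. Since a bridge is a local modification at the top of $F \times I$, any relation whose local picture is disjoint from the bridge commutes with $d_p$ trivially: this handles SB and SD immediately, and handles TD because bridging preserves the dot count on each component (either keeping both dots on one component or merging them onto the bridged component). The NC relation is exactly what Lemma~\ref{ncwelldef} verifies, both on the level of equivalence classes and for the orientations. For NDD, NOS, and KEC, the relevant component becomes either (a) unchanged by a bridge placed elsewhere, or (b) modified by the bridge, at which point I use the key observations following the statement of Lemma~\ref{ncwelldef}: a bridge creates at most one new inessential boundary curve, and a component with an inessential boundary curve is either a disk or compressible. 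In case (b) the modified component is either still incompressible non-disk/non-sphere or still non-orientable, in which case the relation still applies directly; or it has become compressible, in which case Corollary~\ref{esscompr} and Lemma~\ref{nontrivdot} force triviality in the quotient. A similar case analysis applies to the Euler-characteristic bound in KEC, using that attaching a $1$-handle drops $\chi$ further, so the relevant incompressible piece (after possible compressions) still has $\chi < -k$.

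Finally, for orientations, I would verify directly from the definition of $d_p$ that when EO does not occur, sliding the orientation across the bridge yields a unique compatible $F$-orientation on all essential boundary curves of the resulting components, while inessential curves are left unoriented as required; when EO occurs, setting $d_p = 0$ is consistent with the $F$-orientation constraints because any forced orientations would violate compatibility. The potential worry that applying NC before or after $d_p$ could change which curves are forced to be oriented is precisely the content of Lemma~\ref{ncwelldef}: in the problematic configurations one of the resulting summands carries a dot on a component with an essential boundary curve and is therefore trivial by Lemma~\ref{nontrivdot}. The main obstacle will be Stage~(2) for NDD, NOS, and KEC, since a bridge can alter compressibility, orientability, and Euler characteristic of a component simultaneously; the proof must enumerate whether the bridge joins two curves on the same component or on different components, and apply the observations and earlier lemmas uniformly in each sub-case.
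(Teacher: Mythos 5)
Your overall strategy is the same as the paper's: reduce well-definedness to showing $d(B\cup B_k)\subseteq B\cup B_k$, dispose of SB, SD, TD because a bridge does not interact with them, handle NC via Lemma~\ref{ncwelldef}, and treat NDD, KEC, NOS by a case analysis on whether the bridge creates an inessential boundary curve, invoking Lemma~\ref{nontrivdot} and Corollary~\ref{esscompr}. (Your Stage~(1) on gradings is extra; the paper disposes of it in a separate earlier lemma.) One sub-case in your plan is not actually covered by the tools you cite: in the NDD case, after bridging, the dotted component may end up with \emph{only inessential} boundary curves. There Lemma~\ref{nontrivdot} does not apply (it needs an essential boundary curve on the dotted component) and Corollary~\ref{esscompr} does not apply either (it explicitly requires that there be no inessential boundary curves), so ``compressible implies trivial by Corollary~\ref{esscompr} and Lemma~\ref{nontrivdot}'' fails here. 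The paper closes this case differently: the component is not a disk but has an inessential boundary curve, hence is compressible along a separating disk; compressing yields a disk together with a closed dotted non-sphere component, which is trivial by Lemma~\ref{closedcomp}. So you should add Lemma~\ref{closedcomp} to your toolkit and run this compression argument in that sub-case; with that amendment your proposal matches the paper's proof.
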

\begin{proof}
Note the boundary operator is defined on the original module but it actually operates on a quotient of that module.  Thus it needs to be verified that two representations of the same class go to the same class under the boundary operator.  Thus assume $[S] = [S']$ in our quotient.  Therefore $S-S' \in B\cup B_k$.  So we must show that $d(S) - d(S') \in B\cup B_k$.  Since $d$ is linear this is equivalent to showing $d(S-S') \in B\cup B_k$.  Therefore it is sufficient to show that given $b \in B\cup B_k$, that $d(b)\in B\cup B_k$.

Therefore we need to look at the relations that define $k$-foams and see how the boundary operator affects them.  Note that spheres bounding balls and surfaces with two dots are unaffected by the boundary operator, so we do not need to consider the SB, SD, or TD relations.  This leaves the NC, NDD, KEC, and NOS relations to check.  We may assume that the bridge is being placed on the component that has one of the conditions, otherwise the result is immediate.  Also note that if the component in question is bridged to a disk, then the result is again immediate, since adding a disk does not change the component.  Also, since we can assume the surfaces are incompressible by the NC relation we are never bridging to an inessential curve.  We will address these case by case.

\begin{enumerate}

\item[(NC)] 

It must be shown that if foams are related by the neck-cutting relation before applying the boundary operator they are related after applying the boundary operator as well.  Placing a bridge does not remove any compressing disks, so the only issue is how orientations are affected.  Lemma \ref{ncwelldef} shows if the orientations agree before applying the boundary operator, they agree after applying it as well.

\item[(KEC)] 

Assume the boundary operator is applied to a $k$-foam that has an incompressible component with Euler characteristic less than or equal to $-k$.  When the bridge is placed the new component has Euler characteristic less than or equal to $-k-1$.  At most one inessential boundary curve can be created when a bridge is placed.  

If no inessential curves are created, then all curves are essential.  If the component is incompressible, we are done by KEC.  If the component is compressible then we are done by Corollary \ref{esscompr}.

If an inessential curve is created, then apply the NC relation to yield a disk with a dot and a surface of Euler characteristic less than or equal to $-k$.

If the non-disk surface is incompressible, the $k$-foam is trivial.  If the non-disk is compressible then the foam is trivial by Corollary \ref{esscompr}.

\item[(NDD)] 

Assume the boundary operator is applied to an incompressible non-disk dotted component (thus this component has Euler characteristic less than or equal to zero).  After placing the bridge the new connected component has Euler characteristic less than or equal to negative one and it has a dot. If this new component has an essential boundary component then it is trivial in the quotient by Lemma \ref{nontrivdot}.  

Otherwise this component has no essential boundary components and so all boundary components must be inessential.  Since it was originally incompressible it must have only one inessential boundary component after placing a bridge.  There is a compressing disk present since this component is not a disk, but has an inessential boundary curve.  This compressing disk is separating and compressing upon it yields a disk and a closed surface with a dot.  This closed surface is not a sphere since there was a compressing disk.  The entire foam is now trivial in the quotient by Lemma \ref{closedcomp}.

\item[(NOS)]

Assume the boundary operator is applied to a $k$-foam that has an incompressible non-orientable component.  After placing the bridge the component is still non-orientable because a bridge will not change the fact that the component is one-sided.   If inessential curves are created, the neck-cutting relation can be applied to separate them from the rest of the component.  Notice that this operation still leaves the component non-orientable.  If the non-orientable component is compressible the the entire foam is zero by Corollary \ref{esscompr}, since there are no remaining inessential curves.  If the component is now incompressible, then the foam is zero by the NOS relation.

\end{enumerate}

When the boundary operator is applied to elements of $B\cup B_k$, they remain in $B\cup B_k$, so by the remarks at the beginning of this proof, the boundary operator is well defined on the quotient.

\end{proof}

\subsection{$d^2 = 0$}

In order to show $d^2=0$, we need to show that $d_p\circ d_q = d_q\circ d_p$ $\forall p,q$ crossings.  The following lemmas accomplish that.

\begin{definition}

Two boundary curves are said to be \textbf{connected} if they are connected by a former crossing smoothed positively.

\begin{figure}[htb]
\begin{center}
  \includegraphics[height = 1 in]{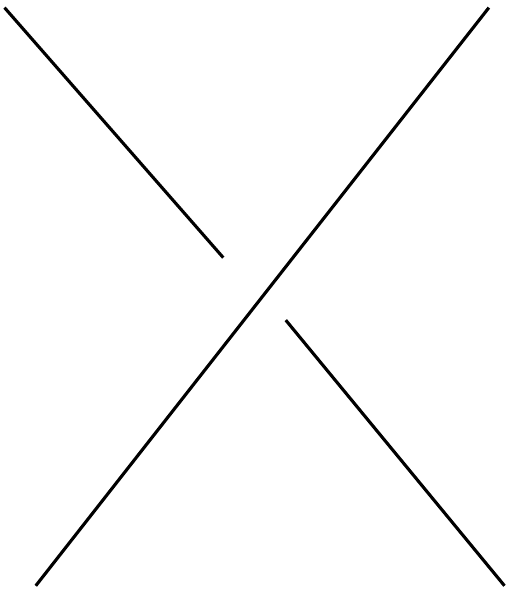} \raisebox{.5 in}{$\rightarrow$} \includegraphics[height = 1 in]{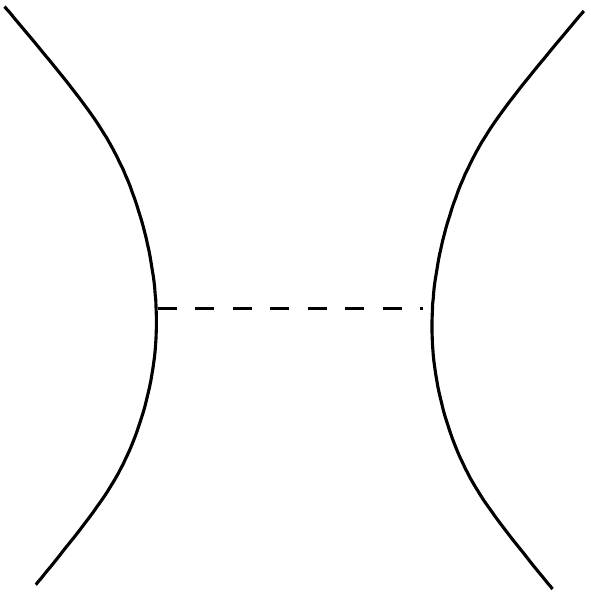}  
\end{center}
\caption{The two curves on the right are said to be connected.}
\end{figure}

\end{definition}

\begin{lemma}

Let $a$ and $b$ be two crossings of a given diagram and let $S$ be a $k$-foam.  If $d_a(d_b(S)) = 0$ because of EO, then $d_b(d_a(S))$ is a trivial $k$-foam.
\end{lemma}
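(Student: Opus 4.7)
The plan is to reduce to cases based on what happens in the opposite order, and show that whenever $d_a(d_b(S))$ vanishes because of EO at $a$, the same topological obstruction prevents $d_b(d_a(S))$ from being a non-trivial element in the quotient $M_D/(B\cup B_k)$. The key point is that the underlying (unoriented) surfaces of $d_a(d_b(S))$ and $d_b(d_a(S))$ coincide, since placing bridges at two distinct crossings commutes as a topological operation; so EO in one order reflects a global orientation obstruction on this common surface that must reappear in the other order, either as another EO or as a structural feature killed by the relations.

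First, I would dispose of the easy cases. If $d_a(S) = 0$ because EO already occurs at bridge $a$ in $S$ (or the crossing $a$ is smoothed negatively in $S$, which would also force vanishing), then $d_b(d_a(S)) = 0$ and we are done. Similarly, if $d_a(S) \neq 0$ but EO occurs at $b$ when placing the $b$-bridge on $d_a(S)$, then $d_b(d_a(S)) = 0$, which is trivial in the quotient.

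The real work is the main case: $d_a(S)$ and $d_b(d_a(S))$ are both defined as $F$-oriented foams. Here I would analyze the obstruction cycle that caused EO at $a$ in $d_b(S)$. There are two subcases. (i) The two endpoints of bridge $a$ in $d_b(S)$ lie on the same oriented component, and sliding the orientation along $a$ produces a flip. This means the loop in the final surface formed by arc inside the component and the $a$-bridge is orientation-reversing, so the corresponding component of $d_b(d_a(S))$ is also non-orientable; applying NC to remove inessential boundary curves and compressions leaves an incompressible non-orientable piece, which is killed by the NOS relation. (ii) The two endpoints of bridge $a$ lie on distinct oriented pieces with incompatible orientations (after $b$ was placed). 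In this case, the same incompatibility persists through the alternate order and must manifest as EO at $b$ in $d_a(S)$ (handled already) or, if some unoriented component or inessential boundary curve mediates between the two, as a dotted component with essential boundary after NC reductions, trivial by Lemma \ref{nontrivdot}. In both subcases I would make use of Lemma \ref{ncwelldef} to commute NC with the boundary operator, so that the orientation analysis is unaffected by the order of applying NC.

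The main obstacle I anticipate is subcase (i) combined with the presence of originally unoriented components or inessential boundary curves generated by the bridge placements. There the non-orientability has to be shown to survive after applying NC and simplifying the foam, and one must verify that one does land on an incompressible non-orientable piece (so that NOS applies) rather than losing the non-orientability through repeated compressions. This is where Corollary \ref{esscompr} and Lemma \ref{nontrivdot} earn their keep: they ensure that any compressible non-orientable component whose boundary curves are essential, or any dotted component with essential boundary that arises in an NC summand, is already trivial in the quotient, so the reduction procedure terminates with each summand being zero in $M_D/(B\cup B_k)$.
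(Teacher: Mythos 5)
Your overall strategy is sound, and in one respect it is genuinely different from the paper's argument. The paper organizes the proof around the configuration of the two incompatibly oriented boundary curves: either they are joined directly by one of the two crossings, or each is joined to a common intermediate (unoriented) curve; it then argues that in the opposite order either the conflict persists (so EO recurs) or one of the conflicting curves can only lose its orientation by becoming inessential, which after NC produces a dotted component with essential boundary, trivial by Lemma \ref{nontrivdot}. Non-orientability and the NOS relation never enter the paper's proof. You instead split according to where the two feet of the EO bridge lie, and your case (i) -- both feet on a single oriented component, with the band attached in an orientation-incompatible way -- is handled by a different mechanism: since the underlying surfaces of $d_a(d_b(S))$ and $d_b(d_a(S))$ coincide, the common surface acquires a non-orientable component, which is killed after NC reductions by NOS together with Corollary \ref{esscompr} and Lemma \ref{nontrivdot} (you will also want Lemma \ref{closedcomp}, or the observation that an embedded non-orientable component in $F\times I$ must retain an essential boundary curve, to dispose of dotted summands with no essential boundary). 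This is a worthwhile addition: it covers, for example, the configuration in which both curves at the EO crossing were unoriented in $S$ and received their conflicting orientations only when the other bridge merged their component with an oriented one, a configuration the paper's curve-by-curve analysis treats only implicitly. Your case (ii) is essentially the paper's argument, at a comparable level of detail.

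The one concrete thing to repair: your main case analyzes ``the obstruction that caused EO at $a$ in $d_b(S)$,'' which tacitly assumes $d_b(S)\neq 0$. The hypothesis also allows $d_b(S)=0$ because EO already occurs at $b$ on $S$ itself; if in the reversed order no EO occurs, then neither your easy cases nor your case analysis as stated applies. This is exactly the paper's first case, and the key fact it uses is one you never state: under $d'_a$ an oriented curve can only lose its orientation (by becoming inessential), never reverse it. Hence after bridging at $a$ either the two curves at $b$ still conflict (EO, done), or one of them has become inessential, and then the NC/dotted-disk/Lemma \ref{nontrivdot} reduction you already invoke in case (ii) finishes. With that case added, and the reduction in case (i) spelled out as above, your proof goes through.
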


\begin{proof}
\label{eocomm}

Assume $d_a(d_b(S)) = 0$ because of EO.  This means at least two oriented boundary curves must be present in the foam and these oriented curves can be, at most, two components away from one another (i.e. both connected to the same boundary component.)

First assume the oriented curves are connected.  Then assume EO occurs at $b$.  If a bridge is placed at $a$ then the curves connected by $b$ can either continue to have conflicting orientation, or one of them may become a curve on a component with a dot (after applying NC).  This is due to the fact that by the orientation rules of $\bar{d}_p$, oriented curves can only lose their orientation, not reverse their orientation, and losing orientation only happens when a curve becomes inessential.  In either case placing a bridge at $b$ after $a$ results in something trivial in the quotient.

Now assume the oriented curves are both connected to an intermediate curve.  Assume without loss of generality this curve is unoriented as in Figure \ref{EOproof}.  (If it is oriented, it cannot be compatible with both curves, since then EO would not occur, so we can reduce to the previous case.)  Placing a bridge at either of the crossings orients the previously unoriented curve in such a way that it is not compatible with the other curve.  Thus when the second bridge is placed in either order there is EO, so $d_b(d_a(S))$ is a trivial $k$-foam.

\begin{figure}[htb]

\begin{center}
\includegraphics[scale = .5]{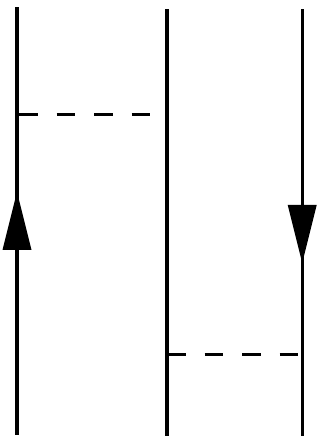}
\end{center}
\caption{Two oriented curves connected to the same unoriented curve. \label{EOproof}}
\end{figure}

\end{proof}

\begin{lemma}
\label{oragree}

Let $S$ be a foam and $a$ and $b$ are crossings in the associated diagram.  Then ${d}_a({d}_b(S))$ has the same orientation on boundary curves as $d_b(d_a(S))$. 

\end{lemma}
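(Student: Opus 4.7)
The plan is to reduce the question to a purely topological statement about components together with a propagation rule for orientations, and argue that both are independent of the order in which the bridges are placed.

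First I would dispense with the degenerate cases. If either composition vanishes because of EO, then by Lemma \ref{eocomm} the other composition is trivial in the quotient, so there is nothing to compare; we may therefore assume throughout that neither $d_a(d_b(S))$ nor $d_b(d_a(S))$ encounters EO. I would also remark that placing two bridges on $S$, one at crossing $a$ and one at crossing $b$, produces the same underlying surface regardless of which bridge is attached first, because the two bridges are supported in disjoint neighborhoods of the crossings and can be pushed past each other by ambient isotopy rel boundary. Consequently the resulting state surface at the top and its connected component decomposition depend only on $\{a,b\}$, not on the order.

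Next I would isolate the rule that determines which essential boundary curves end up oriented. By the definition of $F$-orientation and of $d'_p$, the orientation of an essential boundary curve on a component of the output is entirely determined by the orientations of the originally oriented essential curves on that same (final) component, propagated by the compatibility rule of Definition \ref{comp}. That is: an essential curve in $d_a(d_b(S))$ is oriented iff it lies on a final component that inherited at least one oriented essential curve from $S$, and if it is oriented then its orientation is the unique one compatible with the component's orientation. Since the absence of EO guarantees that any two originally oriented curves that become co-resident on a final component agree, this rule assigns a well-defined orientation to each essential boundary curve.

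With these two ingredients in hand, the comparison is immediate: both $d_a(d_b(S))$ and $d_b(d_a(S))$ have the same underlying surface and the same component decomposition, and on each component the set of originally oriented curves from $S$ that migrate to that component is the same in both orders; hence the propagation rule yields the same oriented/unoriented status and the same chosen orientation on every essential boundary curve. Inessential curves are unoriented by definition in both orders.

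The main obstacle I anticipate is the interaction with the NC relation, which can be invoked during $d'_p$ when a bridge creates an inessential boundary component. I would handle this exactly as in the example preceding Lemma \ref{ncwelldef}: whenever a neck-cutting forces a choice between two possibly conflicting ways of distributing orientations among the resulting components, Lemma \ref{ncwelldef} together with Lemma \ref{nontrivdot} shows that any apparent discrepancy occurs only in summands that are already trivial in the quotient, so the genuine orientation data coming out of both orders coincides.
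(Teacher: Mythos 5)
There is a genuine gap at the heart of your argument, namely the ``propagation rule'' you assert in your third paragraph. You claim that an essential curve of the final foam is oriented if and only if its final component received at least one oriented essential curve of $S$, so that the outcome depends only on the final component decomposition, which is order-independent. But the orientation rule of $d'_p$ is applied stage by stage: after the first bridge, curves that have become inessential are stripped of their orientation, and a component with no remaining oriented essential boundary curves is regarded as unoriented; if the second bridge then creates essential curves on such a component, those curves are \emph{not} oriented, whereas in the other order an essential curve may survive the intermediate stage and pass its orientation on to the very same final curves. So the final orientation is not determined by which originally oriented curves of $S$ end up co-resident on which final component --- and this is exactly the situation the lemma has to address. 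Your appeal to Lemma \ref{ncwelldef} does not cover it: that lemma concerns commuting the NC relation with a single bridge placement, not the irrecoverable loss of orientation between the two bridge placements.

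The paper's proof isolates precisely this case. If the affected components retain an essential curve after either single bridge, the two orders agree at once; the only problematic case is when one order, say $d_a(S)$, leaves no essential curves on the affected components, so the single boundary curve there is inessential. One then neck-cuts along a curve parallel to that inessential curve, discards the summand whose closed component carries the dot (Lemma \ref{closedcomp}), and observes that the boundary curve of the remaining dotted disk can never become essential without making the foam trivial (Lemma \ref{nontrivdot}); since only essential curves can be oriented, $d_b(d_a(S))$ is either trivial or unoriented, and hence agrees with $d_a(d_b(S))$ in the quotient. Some argument of this kind --- showing that the potentially disagreeing configurations are trivial (or forced to be unoriented) in the quotient, rather than that the orientations literally coincide on the nose --- is what your proposal is missing.
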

\begin{proof}

The lemma is immediate if the affected components have more than two essential curves.  This is due to the fact that at most two essential curves can become inessential when a bridge is placed.  If all essential curves are eliminated, then no curves are oriented, so the orientations necessarily agree.

We can then conclude that one of the orderings of crossings has no essential curves on the affected components after one crossing is changed.  Without loss of generality assume $d_a(S)$ has no essential curves.  Thus $d_a(S)$ has one boundary curve and it is inessential.  Applying the NC relation to a curve parallel to the inessential curve yields a disk and a closed surface in a sum with the dots distributed appropriately.  Note the closed surface with a dot is not a sphere and thus is trivial in the quotient by Lemma \ref{closedcomp}.   Thus the only surface left is a closed surface and a disk with a dot.  The boundary curve on the disk cannot become essential, since then the foam would be trivial as it has a dot, by Lemma \ref{nontrivdot}. Since only essential curves can be oriented, the boundary curve cannot become oriented either.  Therefore $d_b(d_a(S))$ is either trivial or unoriented, and in either case the orientation agrees with that of $d_a(d_b(S))$.

\end{proof}

\begin{theorem}
$d^2=0$

\end{theorem}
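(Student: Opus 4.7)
The plan is to reduce $d^2=0$ to the pairwise commutation statement $d_p\circ d_q = d_q\circ d_p$ for all crossings $p,q$, followed by a standard sign-cancellation argument. Expanding,
\begin{equation*}
d^2(S)=\sum_{p,q}(-1)^{t(S,p)+t(d_p(S),q)}\,d_q(d_p(S)),
\end{equation*}
so I would first dispose of the diagonal: for $p=q$, the $p$-th crossing is smoothed negatively in the state of $d_p(S)$, hence $d_p(d_p(S))=0$ by the very definition of $d_p$. The off-diagonal terms I would group into unordered pairs $\{p,q\}$ and show that the two contributions cancel.

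For a fixed unordered pair $\{p,q\}$ with $p\neq q$, I would argue that $d_q(d_p(S))$ and $d_p(d_q(S))$ represent the same class of $k$-foams. The underlying surfaces agree because placing bridges at two distinct crossings is a local operation at disjoint locations in $F\times I$, so the two composites produce isotopic pre-foams after the dust settles. The subtlety is entirely in (i) whether one of the composites is forced to be zero by EO while the other is not, and (ii) whether the induced boundary orientations on essential curves in the bottom agree. Case (i) is exactly Lemma~\ref{eocomm}: if EO kills one order then the other order produces a $k$-foam that is trivial in the quotient, so their classes are still equal (both zero). Case (ii) is Lemma~\ref{oragree}: when both composites are nonzero, the orientations on essential boundary curves match. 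Combining these with the NC/NDD/KEC/NOS identifications already established, I get $[d_q(d_p(S))]=[d_p(d_q(S))]$ in the quotient.

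Next comes the sign bookkeeping. Assume without loss of generality that $p$ precedes $q$ in the fixed ordering of crossings. Applying $d_p$ converts the $p$-th smoothing from positive to negative but does not alter any crossing after $q$, so $t(d_p(S),q)=t(S,q)$. In the opposite order, applying $d_q$ does alter a crossing after $p$ (namely $q$ itself, which is now smoothed negatively), so $t(d_q(S),p)=t(S,p)+1$. Therefore the two ordered contributions to the double sum have total signs
\begin{equation*}
(-1)^{t(S,p)+t(S,q)}\quad\text{and}\quad(-1)^{t(S,q)+t(S,p)+1},
\end{equation*}
which are opposite. Combined with the equality of the underlying foam classes, the two contributions in each unordered pair $\{p,q\}$ cancel, and the double sum vanishes.

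The genuine content of the proof is already packaged in Lemmas~\ref{eocomm} and~\ref{oragree}; once those are in hand, the remaining work is the routine parity check above. I expect the step most likely to require care in the write-up is the claim that bridges at distinct crossings commute as operations on pre-foams modulo $B\cup B_k$, because one must verify that the intermediate topological possibilities (new inessential curves, compressing disks that appear only in one order, potential dot placements coming from NC applied to resolve inessential boundary) do not create an asymmetry between the two orders; this is handled by appealing to Lemma~\ref{nontrivdot} and Corollary~\ref{esscompr} in exactly the same fashion as in Theorem~\ref{bopwelldef}, so no new ideas are needed.
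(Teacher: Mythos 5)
Your proposal is correct and follows essentially the same route as the paper: reduce $d^2=0$ to commutation of the partials $d_p$, handle the EO case via Lemma~\ref{eocomm}, handle orientation agreement via Lemma~\ref{oragree}, and finish with the standard sign cancellation. Your write-up is in fact somewhat more explicit than the paper's (you spell out the vanishing of the diagonal terms and the computation $t(d_p(S),q)=t(S,q)$, $t(d_q(S),p)=t(S,p)+1$, which the paper leaves as "adding in the appropriate negative signs"), but the substance is the same.
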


\begin{proof}
Note that by the definition of the boundary operator all that needs to be shown is that given two crossings $a$ and $b$ and a foam $S$, that $d_a(d_b(S)) = d_b(d_a(S))$.  This is clearly the case if EO does not occur and if we disregard orientation.

If EO does occur we have shown by Lemma \ref{eocomm} that if it occurs for one order of a and b, then  $d_a(d_b(S))$ and $d_b(d_a(S))$ are both trivial in the quotient.  By Lemma \ref{oragree}, $d_a(d_b(S))$ and $d_b(d_a(S))$ are oriented identically.

Thus the partials always commute, so after adding in the appropriate negative signs, $d^2=0$.

\end{proof}

\section{A simpler theory}

We will describe a simpler method to obtain a homology theory that determines the Kauffman bracket than the one from \cite{APS}.  It is simper in the sense that the graded Euler characteristic of the chain complex is exactly the Kauffman bracket.

\begin{definition} Let \textbf{$B_{S}$} be the submodule of $M_D/B$ generated by the following relations: 
\end{definition}

\begin{enumerate}

\item An incompressible annulus with both boundary components in the top or bottom equals zero. (TT)

\item A surface with an incompressible component with negative Euler characteristic equals zero.   (NEC)

\item An incompressible torus is zero. (IT)

\end{enumerate}

\begin{definition}
The elements of $M_D/(B\cup B_{S})$ are called \textbf{simple foams} with respect to the link diagram $D$.
\end{definition}

We now recall a result from Friedhelm Waldhausen's paper, \emph{On Irreducible 3-manifolds Which are Sufficiently Large}:

\begin{theorem}[Waldhausen]\label{waldhausen}

Let $F$ be an orientable surface.  Every incompressible surface with its boundary completely in the top of $F \times I$, is parallel to a subsurface of $F$.

\end{theorem}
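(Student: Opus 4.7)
The plan is to show that $S$ can be ambient-isotoped into a horizontal level $F \times \{t_0\}$, after which the product structure on $F \times I$ exhibits $S$ as parallel to its image $\Sigma = \pi(S) \subset F$, where $\pi: F \times I \to F$ is the first projection. My main tool is the height function $h: F \times I \to I$; I would first perturb $S$ to be in general position so that $h|_S$ is Morse, with the only minima occurring along $\partial S \subset F \times \{0\}$.

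The heart of the argument is to eliminate all interior critical points of $h|_S$ using the incompressibility of $S$. For an interior local maximum of $h|_S$ at height $c$, the surface just below $c$ bounds a small horizontal disk $D \subset F \times \{c - \epsilon\}$ whose boundary lies on $S$; incompressibility of $S$ forces $\partial D$ to bound a disk $D' \subset S$, and $D \cup D'$ bounds a ball by irreducibility of $F \times I$, permitting an ambient isotopy that cancels the maximum. For an interior saddle at height $c$, the level set on $S$ undergoes a band move as one crosses $c$, and the incompressibility of $S$ together with incompressibility of $F \times \{0\}$ yields a disk along which the saddle can be isotoped downward to cancel against an interior minimum (or upward to cancel against a maximum). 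Iterating these cancellations, ordered by decreasing height, exhausts the finite supply of interior critical points.

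Once $h|_S$ has no interior critical points, $S$ fibers over an interval $[0, t_0]$ by its horizontal level sets, and is identified with $\Sigma \times [0, t_0]$ for some compact subsurface $\Sigma \subset F$. Collapsing the $I$-factor by an isotopy of $F \times I$ places $S$ onto $\Sigma \times \{0\}$, exhibiting the required parallelism. The step I expect to be the main obstacle is the saddle cancellation: producing the compressing or boundary-compressing disk needed to cancel each saddle requires ruling out configurations in which the band move is topologically forced on $S$. This is exactly where incompressibility is indispensable, and it is the portion of Waldhausen's original argument that requires the most care in enumerating cases for the intersection pattern of the saddle level sets with $S$.
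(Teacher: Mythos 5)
First, note that the paper does not prove this statement at all: it is imported verbatim as Waldhausen's theorem from \emph{On Irreducible 3-manifolds Which are Sufficiently Large}, so there is no internal proof to compare against; your sketch has to stand on its own, and as written it has a structural flaw. Your endgame is to cancel \emph{all} interior critical points of the height function $h|_S$ and then read off a product structure, but this is impossible and also would not give the desired conclusion. Since $\partial S$ lies entirely in a single horizontal level ($F\times\{0\}$), every component of $S$ attains its height extremum away from that level at an interior point, so $h|_S$ must retain at least one interior critical point per component; more precisely, for a surface parallel to a subsurface $\Sigma\subset F$ the interior critical points of a generic height function compute $\chi(\Sigma)$, so when $\chi(\Sigma)<1$ saddles are topologically unavoidable and ``cancel every saddle'' is not even the right target. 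Moreover, the concluding step confuses the level sets of $h|_S$, which are $1$-manifolds, with horizontal subsurfaces: if $h|_S$ really had no interior critical points, $S$ would be a union of vertical annuli and strips meeting both ends of the $I$-factor, contradicting $\partial S$ lying only in the top, not a product $\Sigma\times[0,t_0]$ that can be flattened onto $F$.

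The step you flag as the main obstacle is indeed where the entire content of the theorem lives, and incompressibility of $S$ alone does not hand you the disk you invoke there: removing a saddle in general requires a boundary-compression or a rearrangement argument, and your maximum-cancellation step likewise only produces a sphere $D\cup D'$ bounding a ball, which lets you push a cap down but does not by itself reduce the number of critical points unless it is paired against a suitable saddle. The standard proof is genuinely different in architecture: one chooses a system of arcs and curves cutting $F$ into disks, takes the vertical disks and annuli above them, puts $S$ in minimal position with respect to them via innermost-circle and outermost-arc arguments (this is where incompressibility, and control of boundary-compressions, is actually used), cuts $F\times I$ open along them, and inducts down to incompressible surfaces in a $3$-ball, where the claim is elementary; see Waldhausen's original argument or Hatcher's notes on $3$-manifolds. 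To repair your approach you would need either to import that hierarchy argument or to replace ``eliminate all interior critical points'' with a precise normal form for $h|_S$ (e.g.\ all level circles inessential on $S$ only when capped by the unique maximum over each component, saddles matched to the topology of $\Sigma$), together with an actual argument that incompressibility forces that normal form.
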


\begin{corollary}\label{mobius}

Non-trivial simple foams cannot have incompressible non-orientable components.

\end{corollary}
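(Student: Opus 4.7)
My plan is to case-split on the topology of the incompressible non-orientable component $C$ and dispatch each case using the $B_S$ relations, Waldhausen's theorem, or a fundamental-group argument.

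First I would apply the NEC relation to kill the case $\chi(C) < 0$. The classification of non-orientable compact surfaces then leaves only three topologies to handle: the M\"obius band ($\chi = 0$, one boundary circle), the Klein bottle ($\chi = 0$, closed), and $\mathbb{RP}^2$ ($\chi = 1$, closed).

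The M\"obius band case is the one I expect to go through cleanly using Waldhausen: since pre-foam boundary lives only in the top or bottom and the M\"obius band's boundary is a single circle, that circle lies entirely in the top or entirely in the bottom of $F \times I$. Theorem \ref{waldhausen} (applied in the relevant end) then produces a parallelism between $C$ and a subsurface of $F$. Every subsurface of the orientable $F$ is itself orientable, contradicting that $C$ is a M\"obius band.

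For the two closed cases I would use $\pi_1$-injectivity of incompressible surfaces (valid even for one-sided surfaces under the usual geometric definition of incompressibility) to get an embedding $\pi_1(C) \hookrightarrow \pi_1(F \times I) \cong \pi_1(F)$. The $\mathbb{RP}^2$ case fails because $\pi_1(F)$ is torsion-free (it is either a free group or a closed orientable surface group), whereas $\pi_1(\mathbb{RP}^2) = \mathbb{Z}/2$. The Klein bottle case fails because its fundamental group is non-abelian yet contains a $\mathbb{Z}^2$ subgroup; any $\mathbb{Z}^2$ subgroup of $\pi_1(F)$ forces $F$ to be a torus, in which case $\pi_1(F) = \mathbb{Z}^2$ is abelian and cannot contain the Klein bottle group.

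The main obstacle is the closed non-orientable cases, since none of the $B_S$ relations directly kill a closed non-orientable component of nonnegative Euler characteristic; one has to argue algebraically that such a component cannot arise as an embedded incompressible surface in $F \times I$ at all. The M\"obius band case, by contrast, is essentially immediate from Waldhausen once one observes the boundary placement.
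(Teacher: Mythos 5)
Your proof is correct, and it follows the paper's decomposition at the outset (NEC disposes of $\chi < 0$, Waldhausen disposes of the M\"obius band), but it diverges on the closed cases. The paper applies Waldhausen's theorem uniformly to every incompressible non-orientable component with at most one boundary circle, reading ``boundary completely in the top'' as vacuously true for a closed surface, so that $\mathbb{RP}^2$ and the Klein bottle are also parallel to orientable subsurfaces of $F$ and hence impossible. You instead replace that step with a fundamental-group argument: $\pi_1(\mathbb{RP}^2)$ has torsion while $\pi_1(F)$ is torsion-free, and $\pi_1(\text{Klein})$ is non-abelian yet would have to sit inside the abelian $\mathbb{Z}^2 = \pi_1(T^2)$, the only surface group containing a $\mathbb{Z}^2$. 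Both routes work. The paper's is shorter and needs only the one cited theorem; yours avoids having to justify that Waldhausen's statement (phrased for surfaces with boundary in the top) covers the closed case, at the cost of quietly invoking $\pi_1$-injectivity for \emph{one-sided} incompressible surfaces, which is true but is itself a standard-but-not-trivial fact (incompressibility of a one-sided surface is usually defined via the two-sided boundary of its regular neighborhood, and one then deduces injectivity on the index-two orientation subgroup and lifts it). If you keep the $\pi_1$ route, it is worth flagging that point explicitly rather than leaving it as ``valid even for one-sided surfaces.''
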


\begin{proof}
If a non-orientable component is incompressible and has zero or one boundary component, then by Theorem \ref{waldhausen} such a component would be parallel to the top of the thickened surface.  However, the top of the thickened surface is orientable, so this is not possible.  All non-orientable components with two boundary components have negative euler characteristic, thus it is not possible to have a non-trivial simple foam with an incompressible non-orientable component.
\end{proof}

\subsection{Boundary operator}

We will now define a chain complex on simple foams.  We will use the $I$ and $J$ indices as defined for $k$-homology.  The $\bar{K}$ index is similar to the $K$ index, however essential curves are no longer oriented, so they don't have a sign associated to them.  

\begin{definition}

Let $S$ be a simple foam.  \textbf{$\bar{K}(S)$} is the diagram of the essential curves in the bottom of the foam.

\end{definition}

\begin{example}  If $S$ is the simple foam in figure \ref{thindexamp} then $\bar{K}(S) =$ \includegraphics[scale = .5]{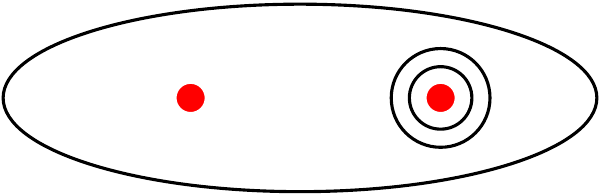}.

\begin{figure}[h!]
\begin{center}
\includegraphics[height = 1 in]{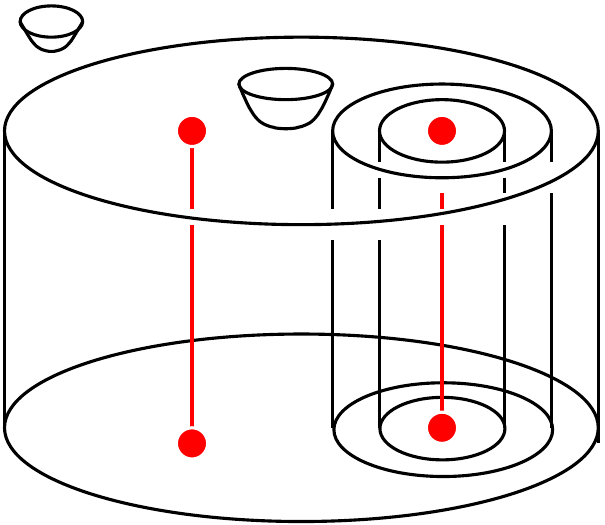}
\end{center}
\caption{The simple foam is in the thickened twice punctured disk.  The vertical lines represent the punctures. \label{thindexamp}}
\end{figure}

\end{example}

\begin{definition}Let $\bar{C}_{i,j,s}(D)$ be the chain group generated by simple foams with respect to the diagram $D$, that can be represented by incompressible vertical annuli and disks with the appropriate $I$, $J$ and $\bar{K}$ grading.
\end{definition}

\begin{definition}
Let $S$ be a simple foam, then $\hat{d}_p(S)$ is $S$, but with a bridge placed at the $p^{\text{th}}$ crossing.
\end{definition}

\begin{lemma}
$\hat{d}_p: \bar{C}_{i,j,s}(D) \rightarrow \bar{C}_{i-2,j,s}$.
\end{lemma}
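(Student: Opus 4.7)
The plan is to verify the claimed grading shift of $\hat{d}_p$ by tracking each of the three indices $I$, $J$, and $\bar{K}$ under the operation of placing a bridge at the $p$-th crossing.

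First I would handle $I$. Placing a bridge at the $p$-th crossing is the saddle cobordism between the two smoothings there; in particular, as in the $k$-homology setting, a positive smoothing is converted into a negative one. Thus $p(S)$ decreases by one and $n(S)$ increases by one, giving $I(\hat{d}_p(S)) = I(S) - 2$.

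Next I would handle $J$. A bridge is a $1$-handle, and attaching a $1$-handle decreases the Euler characteristic of the underlying surface by one, while the number of dots $d$ is unchanged. Hence $2d - \chi$ rises by one, and
$$J(\hat{d}_p(S)) \;=\; \bigl(I(S) - 2\bigr) + 2\bigl((2d - \chi(S)) + 1\bigr) \;=\; J(S).$$
For $\bar{K}$, the bridge sits at a crossing in the top of $F \times I$ and leaves the bottom boundary untouched, so the collection of essential curves on the bottom, and hence $\bar{K}$, is unchanged.

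The one subtle point, and where I expect the main obstacle if any, is that after adding a handle the resulting surface may no longer be a union of incompressible vertical annuli and disks, so one must reduce it modulo $B \cup B_S$ to express the answer in the chosen generating set for $\bar{C}_{\ast,\ast,\ast}(D)$. To justify that this reduction lands in the asserted graded piece, I would check that every relation in $B \cup B_S$ preserves $(I, J, \bar{K})$: the top and bottom states are unaffected by all of these relations (so $I$ and $\bar{K}$ are clearly preserved), NC preserves $2d - \chi$ since it raises $\chi$ by two while adding one dot to each summand, and SB, SD, TD, TT, NEC, IT either kill the surface outright or excise a dotted sphere (for which $2d - \chi = 0$, hence the excision is grading-neutral). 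With these checks in place, the three computations above conclude that $\hat{d}_p$ carries $\bar{C}_{i,j,s}(D)$ into $\bar{C}_{i-2,j,s}(D)$.
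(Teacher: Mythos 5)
Your index computations are correct and agree with the paper's, which in fact dismisses them in one line as immediate: the bridge trades a positive smoothing for a negative one, so $I$ drops by $2$; it is a $1$-handle, lowering $\chi$ by one without adding dots, so $J$ is unchanged; and the bottom boundary is untouched, so $\bar{K}$ is unchanged. The gap is in what you yourself flag as the subtle point. The chain group $\bar{C}_{i,j,s}(D)$ is not the full module of simple foams in that grading: by definition it is generated only by simple foams that can be represented by incompressible vertical annuli and disks. Having identified this, you then only verify that the relations in $B\cup B_{S}$ preserve $(I,J,\bar{K})$ --- that is, that \emph{if} $\hat{d}_p(S)$ can be rewritten in the prescribed generating set, the rewriting lands in the right graded piece. (That check essentially re-proves the well-definedness of the indices, which was already established.) What you never argue is that such a rewriting exists, and that is precisely the substance of the paper's proof.

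The missing argument runs as follows: apply NC until every component is incompressible (this terminates because the Euler characteristic is bounded above); incompressible components of negative Euler characteristic are zero by NEC; incompressible non-orientable components cannot occur in a non-trivial simple foam by Corollary \ref{mobius}, which rests on Waldhausen's theorem; the remaining closed components are spheres, handled by SB and SD, or tori, which are zero by IT; and annuli with both boundary circles in the top or bottom are zero by TT. What survives consists only of disks and incompressible vertical annuli, so $\hat{d}_p(S)$ does lie in $\bar{C}_{i-2,j,s}(D)$. Without some such argument --- which genuinely uses the specific relations in $B_{S}$ and the topology of $F\times I$, not just grading bookkeeping --- you have not shown that the image lies in the chain group at all, only that its grading would be correct if it did.
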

\begin{proof}

If $S \in C_{i,j,s}$, the fact that $I(\hat{d}_p(S)) = i-2$ and $J(\hat{d}_p(S)) = j$ is immediate by Euler characteristic and the fact that changing a positive smoothing to a negative smoothing reduces $I$ by 2.  The curves in the bottom are not affected by the boundary operator, so the $\bar{K}$ index stays constant as well.

The only additional item to show is that $\hat{d}_p(S)$ can be represented in terms of foams that consist only of incompressible vertical annuli and disks since these generate the chain groups.

By the relations every foam can be represented by incompressible components each having non-negative Euler characteristic.  Non-orientable surfaces are again impossible by Corollary \ref{mobius}.  This leaves disks, incompressible annuli and incompressible tori.  Incompressible tori are zero by the IT relation.

\end{proof}

\begin{definition}

If two distinct essential curves are connected by the crossing $p$, then \textbf{EC} is said to occur at the $p^\text{th}$ crossing.

\end{definition}

\begin{definition}$\bar{d}_p:\bar{C}_{i,j,s}(D) \rightarrow \bar{C}_{i-2,j,s}(D)$

$$
\bar{d}_p(S) = \left\{ 
\begin{array}{ccc}
0,&\mbox{ if } \text{$p$-th crossing of $S$ is smoothed negatively}                  \\
0,&\mbox{ if } \text{EC occurs at the $p$-th crossing of $S$}                  \\
\text{$\hat{d}_p(S)$}, &\mbox{ else }
\end{array}\right.
$$
\end{definition}

Then $\bar{d}: \bar{C}_{i,j,s}(D) \rightarrow \bar{C}_{i-2,j,s}(D)$ is defined by

\begin{equation}\bar{d}(S) = 
\sum_{p \text{ a crossing of $D$}} (-1)^{t(S,p)}d_p(S),\end{equation}
where $t(S,p) = |$\{ $j$ a crossing of $D$ : $j$ is after $p$ in the ordering of crossings and $j$ is smoothed negatively in boundary state of S\}$|$

\begin{theorem}

The boundary operator, $\bar{d}$, is well defined with respect to simple foams.

\end{theorem}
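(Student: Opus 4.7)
The plan is to mirror the proof of Theorem \ref{bopwelldef}. Since $\bar{d}$ is defined on the free module $M_D$ but needs to descend to the quotient $M_D/(B\cup B_S)$, it suffices by linearity to verify that for every generator $b$ of the relation submodule $B\cup B_S$ the image $\bar{d}(b)$ still lies in $B\cup B_S$. For each such generator I would assume without loss of generality that the bridge is placed on the distinguished component, since otherwise the relation is inherited after bridging. Several cases are then immediate: the SB, SD, and TD relations involve closed spheres bounding balls or dot configurations that are not disturbed by placing a bridge on the top boundary, and the IT relation involves a closed torus which carries no crossings at all and is therefore invisible to $\hat{d}_p$. The potential appearance of incompressible non-orientable components never needs to be checked because Corollary \ref{mobius} rules them out in non-trivial simple foams.

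For the NC relation the argument used for $k$-foams carries over verbatim, and is in fact easier: placing a bridge does not affect compressing disks away from the bridge, and since simple foams carry no orientation data the subtlety addressed by Lemma \ref{ncwelldef} disappears. For NDD I would reuse Lemmas \ref{nontrivdot} and \ref{closedcomp}, which are statements about the quotient by $B$ and therefore remain valid in the larger quotient. After bridging a dotted incompressible non-disk component, its Euler characteristic drops by one and at most one new inessential boundary circle appears; either an essential curve survives, in which case Lemma \ref{nontrivdot} yields triviality, or NC applied to the inessential curve produces a dotted disk together with a dotted closed non-sphere, which is trivial by Lemma \ref{closedcomp}.

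The substantive work lies in verifying the two new relations, NEC and TT. For NEC, bridging lowers $\chi$ from at most $-1$ to at most $-2$, and I would split on whether the new component is incompressible. If incompressible, NEC applies directly; if compressible, NC produces summands of simpler Euler characteristic to which the argument may be iterated, with termination guaranteed because $\chi$ is bounded above on compact surfaces, exactly as in the inductive arguments of Lemma \ref{nontrivdot} and Corollary \ref{esscompr}. The TT case is the main obstacle: the relation concerns an incompressible annulus with both boundary circles in the top of $F\times I$, and placing a bridge may either fuse the two top circles into a single circle or split one top circle into two, producing a surface of Euler characteristic $-1$ with either one or three top boundary components. In each sub-case I would first handle any newly created inessential curves via NC (peeling off dotted disks and invoking Lemma \ref{closedcomp} where applicable), and then conclude by NEC if the residual component is incompressible, or by iterating NC if it is not. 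A careful bookkeeping of the boundary configuration is needed to ensure that the argument does not silently leave behind a surface that satisfies neither TT, NEC, nor any of the $B$-relations, but the negative Euler characteristic together with the compact-surface induction guarantee termination.
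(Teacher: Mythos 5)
Your overall strategy — reduce to showing $\bar d$ maps each relation generator back into $B\cup B_S$, then check relation by relation — is the same as the paper's, and your handling of SB/SD/TD, IT, NC, NDD, and the non-orientable issue via Corollary~\ref{mobius} matches the paper. However, you have a genuine gap in the two substantive cases, NEC and TT, because you never invoke the EC condition built into the definition of $\bar d_p$. Recall that $\bar d_p(S)=0$ whenever the $p$-th crossing connects two \emph{distinct} essential curves. The paper uses this at the outset of both cases: since an incompressible component of negative Euler characteristic (or a top annulus) has only essential boundary curves, any bridge joining two different boundary circles gives zero immediately, so one may assume the bridge attaches a single essential curve to itself. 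Your proposal instead tries to argue directly through the case where two distinct top circles of the annulus are fused, which leads you into the ``careful bookkeeping'' you acknowledge is incomplete; that bookkeeping is precisely what EC is designed to make unnecessary.

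Even restricting to the self-bridging case, your TT argument has a second gap. You propose to conclude by NEC once inessential curves have been peeled off, but bridging a top-annulus boundary curve to itself and compressing along the one resulting inessential circle produces a dotted disk together with a surface of Euler characteristic $0$ --- i.e.\ another top annulus, not a negative-$\chi$ surface. NEC cannot finish this; you must invoke TT again on the residual annulus, which is what the paper does. As written, your argument would ``silently leave behind a surface that satisfies neither TT, NEC, nor any of the $B$-relations,'' the very danger you flagged. Incorporating the EC reduction and recognizing the recurring top annulus would close both holes.
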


\begin{proof}

We must show that trivial simple foams remain trivial under the boundary operator.  This proof closely follows the proof of Theorem \ref{bopwelldef}.  In that proof the NC and NDD relation were addressed.  This only leaves the TT, NEC and IT relations.  We will address these by cases.

\begin{itemize}

\item[(NEC)]

Assume the boundary operator is applied to a simple foam with an incompressible negative Euler characteristic component.  An incompressible negative Euler characteristic component has only essential boundary curves.  EC occurs unless the same essential curve bridges to itself.  If the component surface is incompressible the foam is trivial by the NEC relation.  If the surface is compressible with no inessential curves, then the foam is trivial by Lemma \ref{esscompr}.

If an inessential curve is created then we can remove it from the rest of the component by the NC relation.  This leaves a disk and a surface of negative Euler characteristic.  If the non-disk component is compressible the foam is trivial by Lemma \ref{esscompr}.  If the non-disk component is incompressible, then the foam is trivial by the NEC relation.

\item[(TT)] 

Assume the boundary operator is applied to a simple foam with an incompressible top annulus.  As before we can assume that the bridge placed on a top annulus bridges the same boundary curve to itself, since if it bridged to another boundary curve, the result would be zero by EC.

If the bridge connects a curve to itself, then there is now either a compressing disk or the surface is incompressible with negative Euler characteristic. The incompressible surface is trivial in the quotient, so we may assume it is compressible.  The surface has either one inessential curve or no inessential curves.  By Corollary \ref{esscompr} we can assume one of the boundary components is inessential.  Thus after compression there is a disk and the the rest of the surface.  By noting the Euler characteristic what is left is a top annulus which is trivial in the quotient.

\item[(IT)]

If a simple foam has an incompressible torus component, then after applying the boundary operator it will continue to have an incompressible torus component, so it remains trivial in the quiotient.

\end{itemize}

\end{proof}

\begin{lemma}
If $d_a(d_b(S)) = 0$ by EC, then $d_b(d_a(S))$ is trivial in the quotient.
\end{lemma}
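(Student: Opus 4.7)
The plan is to mirror the structure of the proof of Lemma \ref{eocomm}. I first dispose of the easy subcases where $\bar{d}_b(\bar{d}_a(S))$ is already zero from the piecewise definition of $\bar{d}$: if $\bar{d}_a(S) = 0$ (because crossing $a$ is smoothed negatively in $S$ or EC already occurs at $a$ in $S$), or if EC occurs at $b$ in $\hat{d}_a(S)$, then $\bar{d}_b(\bar{d}_a(S)) = 0$ and there is nothing further to verify. In every remaining case we have $\bar{d}_b(\bar{d}_a(S)) = \hat{d}_b(\hat{d}_a(S))$, and it is this case where the relations of $B_S$ come in.

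In the remaining case I will use that placing bridges at two distinct crossings is a pair of disjoint, local modifications and so commutes up to isotopy, giving $\hat{d}_b(\hat{d}_a(S)) = \hat{d}_a(\hat{d}_b(S))$ as foams; it then suffices to show $\hat{d}_a(\hat{d}_b(S))$ is trivial in $M_D/(B \cup B_S)$. The EC hypothesis says that in $\hat{d}_b(S)$ the two arcs meeting at crossing $a$ lie on two distinct essential top curves $\gamma_1$ and $\gamma_2$. After choosing a representative of $\hat{d}_b(S)$ built from the generators of $\bar{C}_{i,j,s}(D)$, the curves $\gamma_1$ and $\gamma_2$ are the top boundaries of two different vertical annuli, each with another essential boundary in $F \times \{1\}$. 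Applying $\hat{d}_a$ then fuses these annuli through a bridge into a single pair-of-pants component $P$ of Euler characteristic $-1$, with one top boundary (the merged curve) and two essential bottom boundaries.

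It then remains to show that $P$ forces the foam to be zero in the quotient. If the merged top boundary is essential in $F$, then all three boundary circles of $P$ are essential in $F$, so any essential simple closed curve on $P$ is boundary-parallel and hence not null-homotopic in $F \times I$; thus $P$ is incompressible and is killed by the NEC relation. The main obstacle, and the place where the argument diverges from Lemma \ref{eocomm}, is when the merged top boundary is inessential: then the disk it bounds in $F \times \{0\}$ pushes slightly into $F \times I$ to give a compressing disk for $P$ parallel to the top boundary, so $P$ has an inessential boundary curve and neither NEC nor Corollary \ref{esscompr} applies directly. I plan to handle this by applying the NC relation along that compressing disk, rewriting $P$ as a sum in which each summand contains a dotted disk at the top together with an annulus whose two boundaries are the essential bottom curves of $P$; this annulus has both boundaries in the bottom and is incompressible (an essential curve in $F$ is still essential in $F \times I$), so the TT relation kills it in each summand. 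Either way $\hat{d}_a(\hat{d}_b(S)) = 0$ in the quotient, whence $\bar{d}_b(\bar{d}_a(S))$ is trivial.
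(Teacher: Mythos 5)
Your proof is correct, and it takes a genuinely different route from the paper's. The paper argues at the level of the state diagram: it classifies the possible configurations of the two crossings relative to the essential top curves (directly connected, versus connected through an inessential intermediate curve), then chases curves through the two orders of bridging, showing that in each configuration either EC occurs for the other order or a trivial foam is produced. You instead work at the level of the foam itself: after disposing of the degenerate subcases, you use that $\hat{d}_a$ and $\hat{d}_b$ commute as isotopies, reduce $\hat{d}_b(S)$ to the generating incompressible vertical annuli and disks, and then observe that $\hat{d}_a$ fuses the two annuli carrying $\gamma_1$ and $\gamma_2$ into a pair of pants $P$ with $\chi(P) = -1$ and two essential bottom boundaries, which is killed either by NEC (when the merged top boundary is essential, making $P$ incompressible) or by NC followed by TT together with Lemma \ref{nontrivdot} (when the merged top boundary is inessential). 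Your approach is tighter and identifies a single clean topological ``obstruction'' (the pair-of-pants component), at the cost of invoking the generator representation of $\bar{C}_{i,j,s}(D)$ and the fact that $\hat{d}_a$ descends to the quotient $M_D/(B \cup B_S)$; the paper's approach is more elementary, staying entirely at the level of curves in the diagram, but its case analysis is harder to follow and less obviously exhaustive. One small point worth stating explicitly: in the inessential-top case, the NC relation produces \emph{two} summands, and the one whose dot lands on the bottom annulus is killed by NDD (or Lemma \ref{nontrivdot}) rather than TT, while the one whose dot lands on the disk is killed by TT as you say.
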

\begin{proof}

If $d_a(d_b(S)) = 0$ by EC then there must be at least two essential boundary curves in the top of the foam and they must either be connected, or both are connected to the same curve.  

If they are both connected to the same curve we may assume the intermediate curve is inessential otherwise we are in the other case.  When an essential and an inessential curve combine we are left with an essential curve, thus in either order, the result is zero.

If two essential curves are connected to each other we can assume one crossing connects them, while another crossing connects one of the curves to itself.  Otherwise, if it connects anywhere else the other crossing still connects two essential curves or an essential and inessential.  Either of which results in an essential curve.  When a bridge is placed at the self-connecting crossing an inessential curve may be produced along with an essential curve.  After applying the NC relation the other crossing either connects an essential curve to an essential or an essential to an inessential on a dotted disk.  In either case the result is trivial in the quotient.

\end{proof}

\begin{theorem}

$d^2=0$

\end{theorem}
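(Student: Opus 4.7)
The plan is to mirror the argument used for the $k$-homology version of $d^2 = 0$. It suffices to show that for any two distinct crossings $a$ and $b$ of the diagram $D$ and any simple foam $S$, we have $\bar{d}_a(\bar{d}_b(S)) = \bar{d}_b(\bar{d}_a(S))$ in the quotient; once this is known, the sign factor $(-1)^{t(S,p)}$ has been arranged exactly so that the off-diagonal terms of $\bar{d}^2$ cancel in pairs, and the diagonal terms $\bar{d}_p \circ \bar{d}_p$ vanish because after placing a bridge at $p$ the $p$-th crossing is smoothed negatively, so $\bar{d}_p$ returns zero.

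To prove the commutation, first I would dispose of the easy cases. If the $p$-th or $q$-th crossing is already smoothed negatively in the top state of $S$, the corresponding $\bar{d}_p(S)$ or $\bar{d}_q(S)$ is zero, and the other order is zero for the same reason because placing a bridge at a different crossing does not change which smoothing appears at $p$ or $q$. If no crossing is smoothed negatively and EC does not occur in either order, then $\bar{d}_a(\bar{d}_b(S))$ and $\bar{d}_b(\bar{d}_a(S))$ are obtained by attaching two bridges to $S$ at disjoint crossings; since the bridges are disjoint and supported in disjoint local neighborhoods in $F \times I$, the two orders produce isotopic compact surfaces in $F \times I$, hence equal simple foams.

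The remaining case is when EC occurs for at least one of the orderings. By the previous lemma, if $\bar{d}_a(\bar{d}_b(S)) = 0$ because EC occurs at the crossing $a$ after bridging at $b$, then $\bar{d}_b(\bar{d}_a(S))$ is already trivial in the quotient; the symmetric statement handles the opposite order. Hence both compositions land in the trivial class and therefore agree. Combining the three cases yields $\bar{d}_a \circ \bar{d}_b = \bar{d}_b \circ \bar{d}_a$ for all pairs of crossings.

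The only step that requires any genuine work is the geometric commutation in the clean case, and even that is essentially a picture: two bridges attached at disjoint crossings can be built in either order. The potentially subtle point is that, unlike the $k$-homology proof, there are no orientation issues to track on essential boundary curves for simple foams, so I do not expect the argument to become delicate. The main obstacle, therefore, is purely bookkeeping: making sure every configuration in which EC occurs in only one order is covered by the preceding lemma, which it is.
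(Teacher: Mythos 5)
Your proposal is correct and follows essentially the same route as the paper: partial boundary operators commute in the EC-free case because the bridged surfaces are isotopic, the EC case is handled by the preceding lemma showing both orders are trivial in the quotient, and the signs $(-1)^{t(S,p)}$ turn commutation into anti-commutation so that $\bar{d}^2=0$. The extra bookkeeping you include (diagonal terms and crossings already smoothed negatively) is consistent with, and only slightly more explicit than, the paper's argument.
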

\begin{proof}

The partial boundary operators commute if EC does not occur since the resulting foams are isotopic.  The preceding lemma shows that if EC occurs, the partials commute then too.  Thus with the appropriate negative signs, the partial boundary operators anti-commute, and since $d$ is a sum of the partial boundary operators with the negative signs, we have $d^2=0$.

\end{proof}

\subsection{Graded Euler characteristic}

\begin{definition}
The graded Euler characteristic of a chain complex is

$$\chi(C_{*,*,s}(D)) = \sum_{i,j} A^j(-1)^{\frac{j-i}{2}}rkC_{i,j,s}(D).$$\end{definition}

\begin{theorem}

The graded Euler characteristic of the chain complex defined in this section is the Kauffman bracket of the link in the surface.

\end{theorem}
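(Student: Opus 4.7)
The plan is to exhibit an explicit free basis for each chain group $\bar{C}_{i,j,s}(D)$, indexed by states of $D$ together with dot decorations, and then evaluate the resulting generating function directly. First I would fix a state $s'$ of the diagram and describe its generators in $\bar{C}_{*,*,s}(D)$. The curves of $s'$ split into inessential curves in $F$ and essential ones; the allowed normal-form representative is obtained by placing a horizontal disk (pushed into $F\times I$) under each inessential curve and an incompressible vertical annulus under each essential curve, with the bottom of each annulus going to the prescribed essential curve of $s$ in its homotopy class. The grading $\bar{K}$ restricts attention to those states $s'$ whose essential-curve diagram coincides with $s$.

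Next I would identify exactly which dot configurations survive in the quotient. By TD each component carries at most one dot, and by Lemma \ref{nontrivdot} any dotted vertical annulus is trivial, since such an annulus is incompressible, is neither a disk nor a sphere, and has essential boundary. Hence dots may be placed only on disks, giving $2^{m(s')}$ basis elements for each state $s'$, where $m(s')$ denotes the number of inessential curves of $s'$.

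I would then compute the contribution of these generators to the graded Euler characteristic directly. For a representative $S$ arising from state $s'$ with $d$ of its inessential disks dotted, one has $I(S) = p(s')-n(s')$ and $\chi(S) = m(s')$ (annuli contribute $0$, disks contribute $1$), so $J(S) = I(S) + 2(2d - m(s'))$ and
\begin{equation*}
A^{J(S)} (-1)^{(J(S)-I(S))/2} = A^{p(s')-n(s')} (-A^{2})^{2d-m(s')}.
\end{equation*}
Summing over the $2^{m(s')}$ dot placements, each inessential disk contributes an independent factor $(-A^{2})^{-1}+(-A^{2}) = -A^{-2}-A^{2}$ and each essential annulus contributes $1$, yielding
\begin{equation*}
\chi(\bar{C}_{*,*,s}(D)) = \sum_{s' \,:\, \mathrm{ess}(s')=s} A^{p(s')-n(s')} (-A^{2}-A^{-2})^{m(s')}.
\end{equation*}

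Finally I would observe that this is precisely the coefficient of $[s]$ in the state-sum expansion of the Kauffman bracket of $D$ in $F$: inessential circles receive the loop value $-A^{2}-A^{-2}$, while essential circles survive as basis elements of the Kauffman bracket skein module of $F\times I$. Summing over $s$ weighted by $[s]$ reproduces $\langle D \rangle$. The main obstacle is the first step, namely verifying that the prescribed normal-form foams genuinely give a free basis of $\bar{C}_{i,j,s}(D)$: one must rule out nontrivial linear relations among them (this is where the full strength of the TT, NEC, IT and NDD relations, together with Corollary \ref{mobius} and Theorem \ref{waldhausen}, is used to exclude any other incompressible components) and confirm that every generator can be reduced to this form without introducing additional independent classes.
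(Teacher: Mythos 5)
Your argument is correct, but it follows a genuinely different route from the paper. You compute the graded Euler characteristic in closed form as a state sum: you take as generators the normal-form foams (a dotted or undotted disk under each inessential state circle, an undotted incompressible vertical annulus under each essential circle, dots on annuli being killed by Lemma \ref{nontrivdot}), check that each such generator contributes $A^{I}(-A^{2})^{2d-m}$, and sum over dot placements to get the factor $(-A^{2}-A^{-2})$ per inessential circle, recovering the Kauffman bracket state sum with the $\bar{K}$-grading picking out the coefficient of the skein-module basis element $[s]$. The paper instead never writes the state sum: it verifies the three defining skein properties of the bracket at the level of graded Euler characteristics --- adding a disjoint trivial circle multiplies $\chi$ by $(-A^{2}-A^{-2})$ via the rank identity $\mathrm{rk}\,C_{i,j,s}(D\cup\bigcirc)=\mathrm{rk}\,C_{i,j-2,s}(D)+\mathrm{rk}\,C_{i,j+2,s}(D)$, resolving a crossing gives $\chi(C(D))=A\chi(C(D_{+}))+A^{-1}\chi(C(D_{-}))$, and crossingless diagrams give the skein representative --- and then invokes the axiomatic characterization of the bracket. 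Your approach is more explicit and makes the role of the third grading transparent, at the cost of having to justify that the normal-form foams really form a free basis (no hidden relations among them); the paper's recursive verification avoids stating that claim explicitly, although its rank identities quietly rely on the same kind of decomposition of the chain groups, so your flagged ``main obstacle'' is not a defect relative to the paper's own level of rigor. One small wording correction: the states contributing to the $\bar{K}=s$ summand are those whose essential curves are isotopic in $F$ to the fixed bottom diagram $s$ (the vertical annuli realize the isotopy), not those whose essential-curve diagram literally coincides with $s$.
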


\begin{proof}

We will show that the graded Euler characteristic satisfies the three skein conditions of the Kauffman bracket.

\begin{enumerate}

\item

\begin{align*}
\chi(C_{*,*,s}(D\cup \bigcirc)) & = \sum_{i,j} A^j(-1)^{\frac{j-i}{2}}rkC_{i,j,s}(D \cup \bigcirc)  \\
& =\sum_{i,j} A^j(-1)^{\frac{j-i}{2}}(rkC_{i,j-2,s}(D)+rkC_{i,j+2,s}(D))\\
&   = \sum_{i,j} A^2A^{j-2}(-1)^{\frac{j-2-i+2}{2}}(rkC_{i,j-2,s}(D))  \\
& \quad + \sum_{i,j} A^{-2}A^{j+2}(-1)^{\frac{j+2-i-2}{2}}rkC_{i,j+2,s}(D)) \\
& = \sum_{i,j} A^2A^{j-2}(-1)^{\frac{j-2-i}{2}+1}(rkC_{i,j-2,s}(D)\\
& \quad +\sum_{i,j} A^{-2}A^{j+2}(-1)^{\frac{j+2-i}{2}-1}rkC_{i,j+2,s}(D)) \\                              
&  =  -A^2\sum_{i,j}A^{j-2}(-1)^{\frac{j-2-i}{2}}(rkC_{i,j-2,s}(D) \\
& \quad -A^{-2}\sum_{i,j} A^{j+2}(-1)^{\frac{j+2-i}{2}}rkC_{i,j+2,s}(D))\\
& =   (-A^2-A^{-2})\chi(C_{*,*,s}(D))                  
\end{align*}

\item
\begin{align*}
\chi(C_{*,*,s}(D)) & =  \sum_{i,j} A^j(-1)^{\frac{j-i}{2}}rkC_{i,j,s}(D) \\
&=  \sum_{i,j} A^j(-1)^{\frac{j-i}{2}}(rkC_{i-1,j-1,s}(D_+) + rkC_{i+1,j+1,s}(D_-)) \\
& =\sum_{i,j} A^j(-1)^{\frac{j-i}{2}}(rkC_{i-1,j-1,s}(D_+) \\
& \quad + \sum_{i,j}A^j(-1)^{\frac{j-i}{2}} rkC_{i+1,j+1,s}(D_-))    \\
& = \sum_{i,j} A*A^{j-1}(-1)^{\frac{j-1-(i-1)}{2}}(rkC_{i-1,j-1,s}(D_+) \\
& \quad + \sum_{i,j}A^{-1}A^{j+1}(-1)^{\frac{j+1-(i+1)}{2}} rkC_{i+1,j+1,s}(D_-))   \\ 
&     = A\sum_{i,j} A^{j-1}(-1)^{\frac{j-1-(i-1)}{2}}(rkC_{i-1,j-1,s}(D_+) \\
& \quad + A^{-1}\sum_{i,j}A^{j+1}(-1)^{\frac{j+1-(i+1)}{2}} rkC_{i+1,j+1,s}(D_-))   \\                       
&     = A\chi(C_{*,*,s}(D_+)) + A^{-1}\chi(C_{*,*,s}(D_-)) 
\end{align*}

\item

Consider $D$, a diagram of a link with no crossings.  The only possible non-trivial foams are ones with essential curves connected to essential curves in the bottom by vertical annuli.  Note the graded Euler characteristic is the representation of the link in the Kauffman bracket for surfaces.

\end{enumerate}

\end{proof}

\remark{The most significant advantage for using foams to generate the chain groups is a simpler proof of $d^2 =0$}

\section{Invariance of both homology theories}

We will prove that both theories are invariants of framed links in thickened surfaces simultaneously.  Note that all theorems and statements in this section can be applied to either theory.  The strategy to prove invariance will be the one employed by Bar-Natan in \cite{BN2} involving acyclic sub-complexes.  Thus we shall use the following theorem.

\begin{theorem}\cite{BN2}
\label{bn}
If $C'$ is acyclic, then $H(C/C') \cong  H(C)$, that is $C$ and C/C' produce the same homology.
\end{theorem}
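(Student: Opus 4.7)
The plan is to deduce this from the standard long exact sequence in homology associated to a short exact sequence of chain complexes. Since $C'$ is a subcomplex of $C$ (implicit in the notation $C/C'$ making sense), the quotient $C/C'$ inherits a differential, and there is a short exact sequence of chain complexes
\begin{equation*}
0 \longrightarrow C' \stackrel{\iota}{\longrightarrow} C \stackrel{\pi}{\longrightarrow} C/C' \longrightarrow 0,
\end{equation*}
where $\iota$ is the inclusion and $\pi$ is the projection. Both maps commute with the differentials, so this is a genuine short exact sequence in the category of chain complexes.

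The next step is to invoke the zig-zag lemma (snake lemma applied levelwise) to obtain the associated long exact sequence in homology:
\begin{equation*}
\cdots \longrightarrow H_{n+1}(C/C') \stackrel{\partial}{\longrightarrow} H_n(C') \stackrel{\iota_*}{\longrightarrow} H_n(C) \stackrel{\pi_*}{\longrightarrow} H_n(C/C') \stackrel{\partial}{\longrightarrow} H_{n-1}(C') \longrightarrow \cdots
\end{equation*}
This is a purely homological algebra construction and does not depend on the specific nature of the complexes in question.

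Finally, we use the hypothesis that $C'$ is acyclic, meaning $H_n(C') = 0$ for all $n$. Plugging this into the long exact sequence, every third term vanishes, and exactness forces $\pi_* : H_n(C) \to H_n(C/C')$ to be simultaneously injective (because its kernel is the image of $\iota_* : 0 \to H_n(C)$) and surjective (because the next connecting map lands in $H_{n-1}(C') = 0$). Hence $\pi_*$ is an isomorphism in every degree, which gives the desired $H(C) \cong H(C/C')$.

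Since the result is cited verbatim from \cite{BN2} and is a textbook consequence of the snake lemma, there is no genuine obstacle; the only thing to be careful about is ensuring that the short exact sequence and the associated long exact sequence are set up with the correct direction of the connecting homomorphism (in this grading convention the differential lowers $I$ by two, so the long exact sequence should be written with the indexing that matches the paper's differential), but this is purely bookkeeping and does not affect the conclusion.
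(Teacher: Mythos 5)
Your proof is correct: the short exact sequence $0 \to C' \to C \to C/C' \to 0$ together with the long exact sequence in homology and the vanishing of $H(C')$ immediately gives that $\pi_*$ is an isomorphism, which is the standard argument for this fact. The paper itself offers no proof, citing the result directly from \cite{BN2}, so your argument supplies exactly the routine homological-algebra justification that the citation presupposes, and your remark about matching the indexing to the paper's differential (which lowers $I$ by two) is the only bookkeeping point worth noting.
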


\begin{remark}The disk without a dot acts like a unit as does the circle marked with a $v_-$ in \cite{BN2}.
\end{remark}

\subsection{Reidemeister I}

If the diagram has a positive twist we have the complex of complexes $C$ in Figure \ref{R1}.  $C$ has a subcomplex $C'$ where the disk never has a dot.  Note $C'$ is acyclic since the map has no kernel and is onto.

 \begin{figure}[h!]
\begin{center}

\subfigure[The complex of complexes, $C$, coming from a positive twist.]{
$C = \left[ \left[ \includegraphics[height = .2 in, trim = 0 70 0 0]{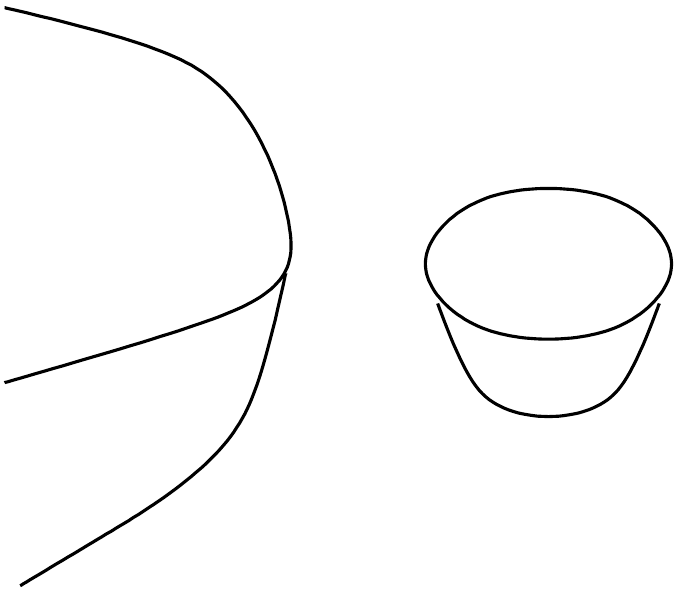} \right] \right] \rightarrow\left[ \left[ \includegraphics[height = .2 in, trim = 0 70 0 0]{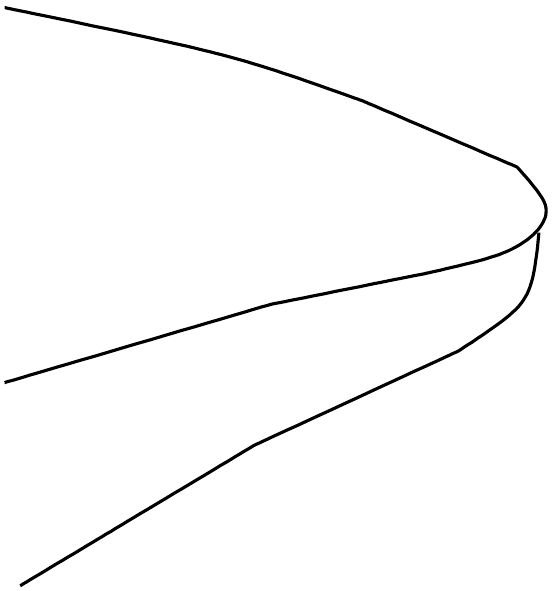} \right] \right]$
}
\hspace{.3 in}
\subfigure[The acyclic complex $C'$, where the disk never has a dot.]{
$ C' = \left[ \left[ \includegraphics[height = .2 in, , trim = 0 70 0 0]{fig1.pdf}  \right] \right]_{/\includegraphics[height = .1 in]{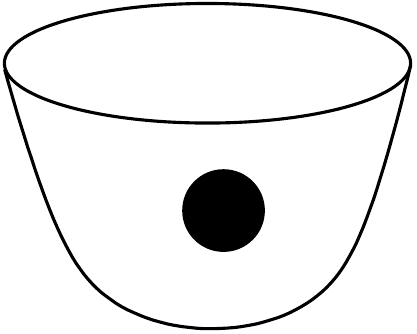}} \rightarrow$ $\left[ \left[ \includegraphics[height = .2 in, , trim = 0 70 0 0]{fig2.pdf}  \right] \right] $
}

\vspace{.2 in}

\subfigure[The complex $C/C'$, where the disk always has a dot.]{
\hspace{.3 in} $C/C' = \left[ \left[ \includegraphics[height = .2 in, , trim = 0 70 0 0]{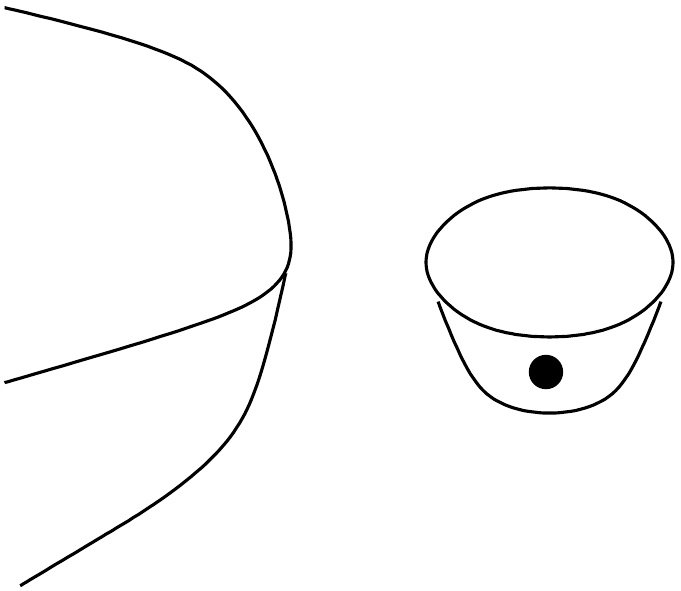}  \right] \right] \rightarrow  0$ \hspace{.3 in}
}
\hspace{.3 in}
\subfigure[The complex $D$, where the positive twist has been removed.]{
\hspace{.5 in} $D = \left[ \left[ \includegraphics[height = .2 in, , trim = 0 70 0 0]{fig2.pdf}  \right] \right]$ \hspace{.5 in}
}
\caption{The complexes associated with the Reidemeister one move.       \label{R1}}
\end{center}
\end{figure}

Note the complexes $C/C'$ and $D$ are identical apart from a shift in degrees.  Then by Theorem \ref{bn}, we have that $C$ and $D$ produce the same homology, up to degree.

To see how the degrees differ note that foams in $C/C'$ have an extra disk with a dot and an extra crossing that is smoothed positively.  Thus if $S$ is a foam in $D$ and $S'$ is the corresponding foam in $C/C'$ then $I(S) +1 = I(S')$,  $d(S) + 1= d(S')$ and $\chi(S) +1 = \chi(S')$, thus $J(S') = I(S') + 2(2d(S') - \chi(S')) = I(S) +1 +2(2(d(S) + 1) - (\chi(S) + 1)) = I(S) + 1 + 2(2d(S) + 2 -1 - \chi(S)) = I(S) + 2(2d(S) - \chi(S)) + 3$.  So $ I(S) + 1 = I(S')$ and $J(S) + 3 = J(S')$.   Thus we know exactly how a positive twist affects the homology.

\subsection{Reidemeister II}

Let $C$ be the complex in Figure \ref{R2} coming from one side of the Reidemeister two move.  Note $C$ has a subcomplex $C'$, depicted in Figure \ref{R2C'}, where the disk never has a dot.  $C'$ is acyclic since the only non-trivial map has no kernel and is onto. Now consider the complex $C/C'$ which is shown in Figure \ref{R2C/C'}.

\begin{figure}[h]

\begin{center}\begin{tabular}{cccccc}

$\left[ \left[ \includegraphics[height = .3 in, trim = 0 70 0 0]{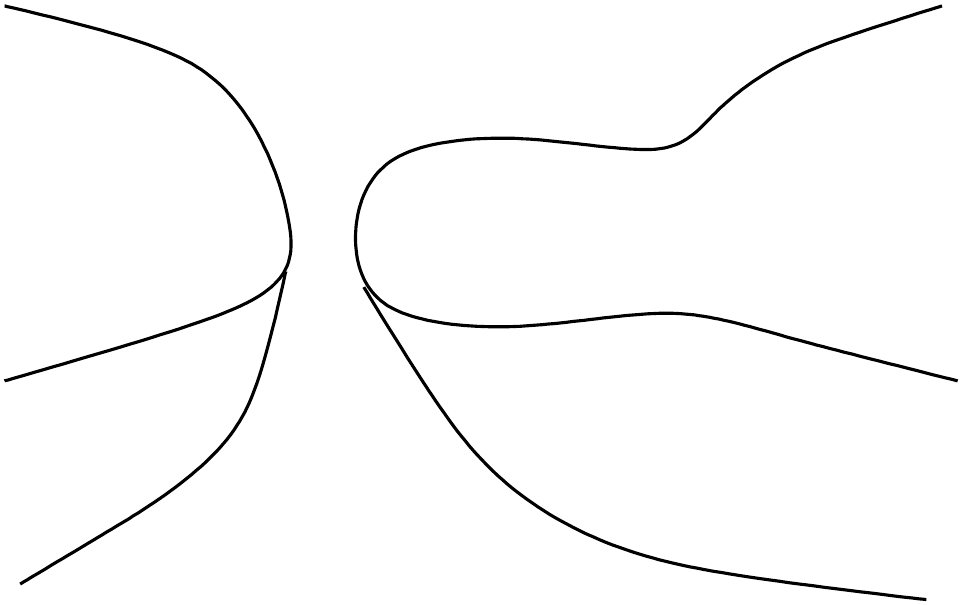} \right] \right]$& $\rightarrow$ & $

\left[\left[ \includegraphics[height = .3 in, trim = 0 70 0 0]{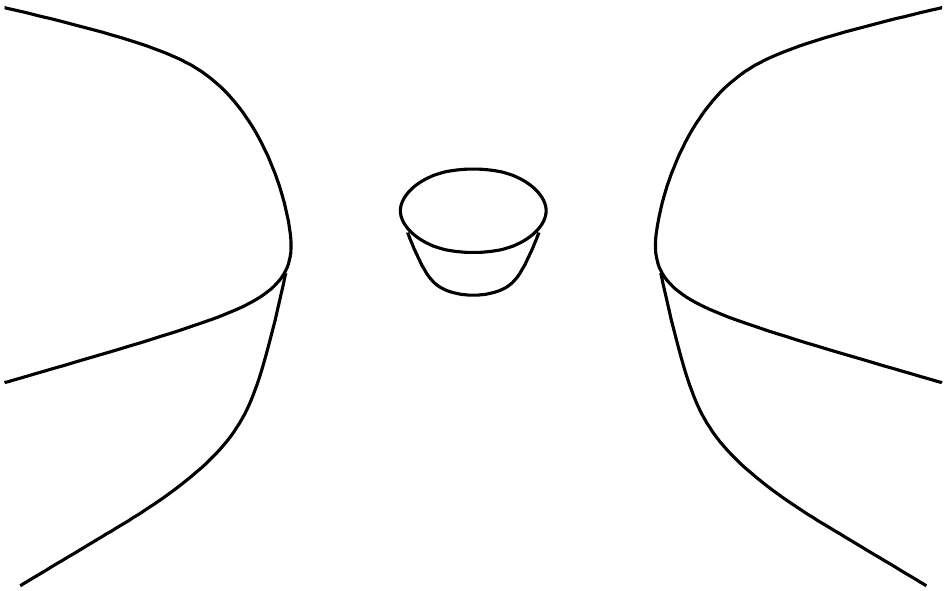} \right] \right]$ & & \\

$\downarrow$ & $C$ & $\downarrow$ \\

$\left[ \left[ \includegraphics[width = .8 in, trim = 0 50 0 0]{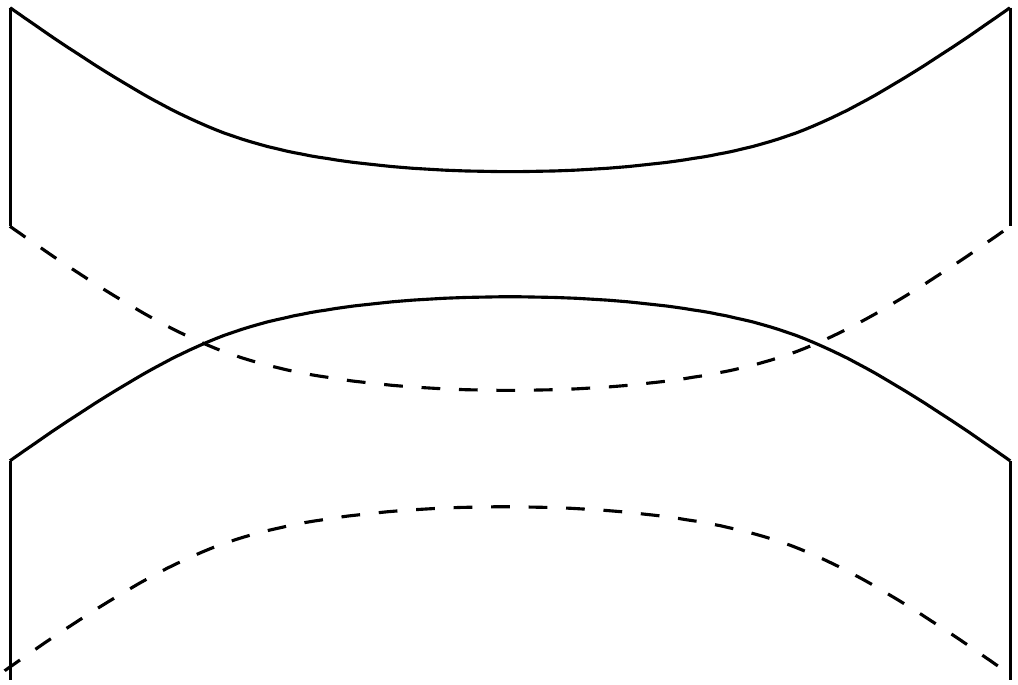} \right] \right]$ & $\rightarrow$& $

\left[ \left[ \includegraphics[height = .3 in, trim = 0 70 0 0]{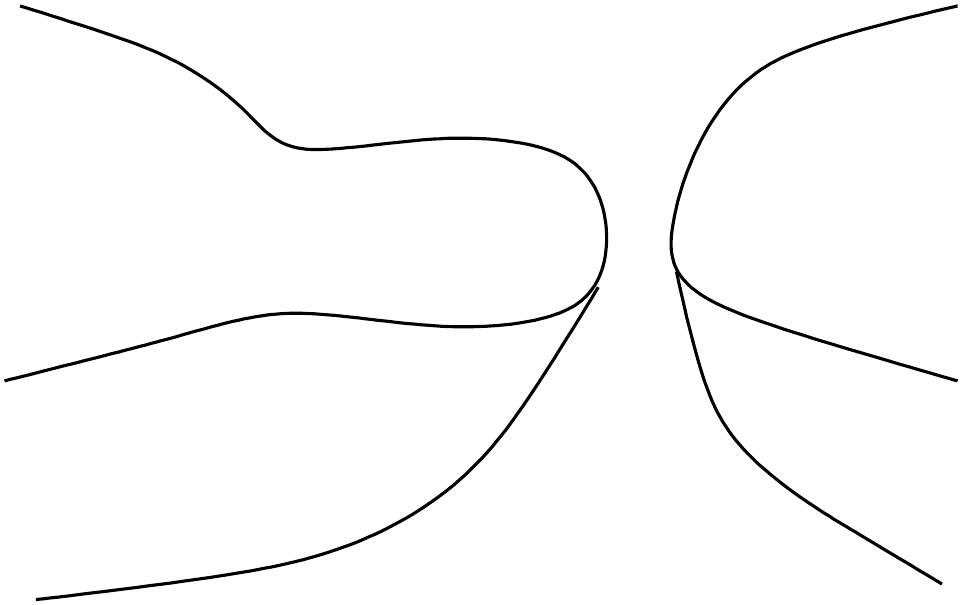} \right] \right]$ \\

\end{tabular}\end{center}

\caption{The complex coming from one side of the Reidemeister two move. \label{R2}}
\end{figure}

\begin{figure}[h]
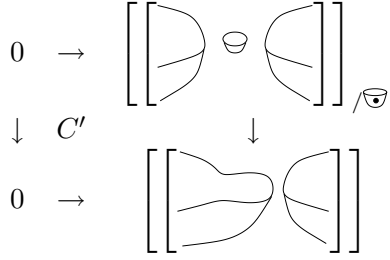


\begin{center}\begin{tabular}{cccccc}

0& $\rightarrow$ & 

$\left[ \left[ \includegraphics[height = .3 in,  trim = 0 70 0 0]{fig4.pdf} \right] \right] _{/\includegraphics[height = .1 in]{diskdot.pdf}}$& \\

$\downarrow$ & $C'$ & $\downarrow$ \\

0 & $\rightarrow$& $\left[ \left[ \includegraphics[height = .3 in, trim = 0 70 0 0]{fig5.pdf} \right] \right]$ \\

&&&&

\end{tabular}

\end{center}
\caption{The acyclic subcomplex $C'$. \label{R2C'}}
\end{figure}

\medskip

\begin{figure}[h]

\begin{center}\begin{tabular}{cccccc}

$\left[ \left[ \includegraphics[height = .3 in,  trim = 0 70 0 0]{fig6.pdf} \right] \right]$& $\rightarrow$ \hspace{-.2 in}\raisebox{.1 in}{$\Delta$} & 

$\left[ \left[ \includegraphics[height = .3 in,  trim = 0 70 0 0]{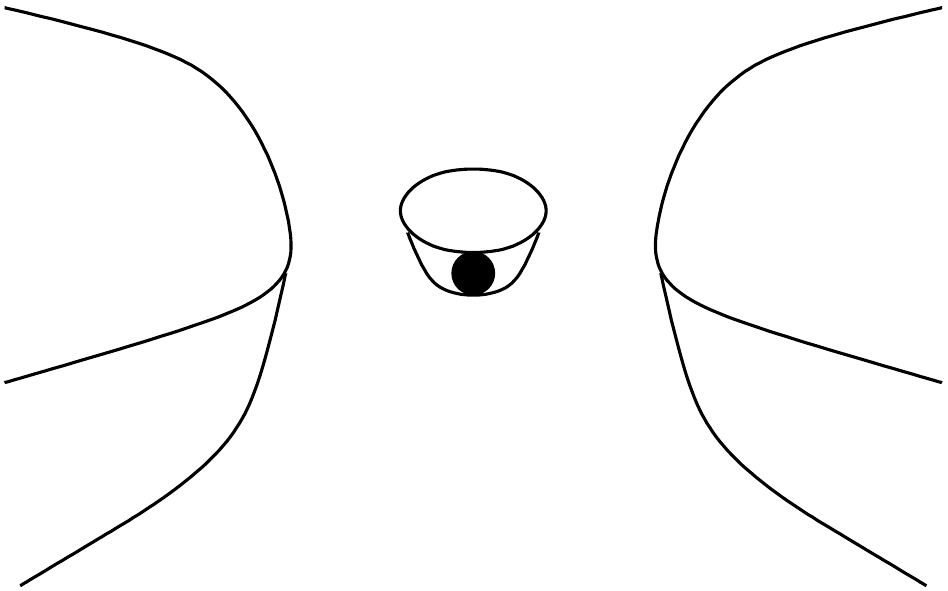} \right] \right]$ & & \\

$\downarrow m $ & $C/C'$ & $\downarrow$ \\

 $\left[ \left[ \includegraphics[height = .3 in,,  trim = 0 70 0 0]{fig91.pdf} \right] \right]$ & $\rightarrow$& 0 \\

&&&&

\end{tabular}

\end{center}
\caption{The complex $C/C'$.    \label{R2C/C'}}
\end{figure}

\medskip

Now we define $\tau: \left[ \left[ \includegraphics[height = .3 in,,  trim = 0 70 0 0]{fig8.pdf} \right] \right] \rightarrow \left[ \left[ \includegraphics[height = .3 in,,  trim = 0 70 0 0]{fig91.pdf} \right] \right]$ on $C/C'$, by 

\begin{center}$\tau\left(\includegraphics[height = .3 in,,  trim = 0 70 0 0]{fig8.pdf}\right)=$ \includegraphics[height = .2 in,  trim = 0 70 0 0]{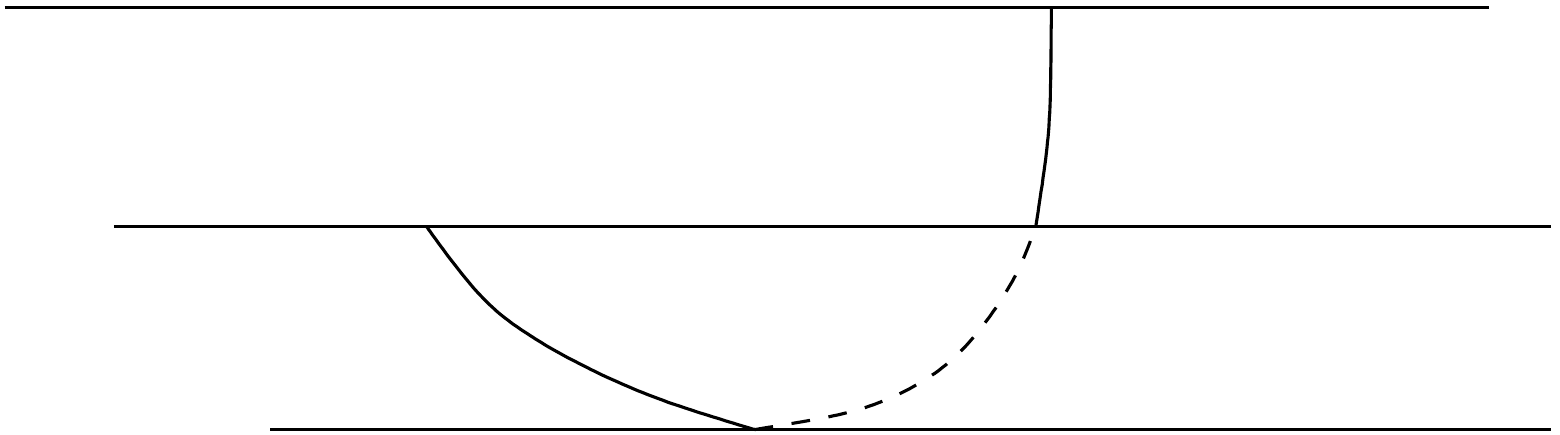} \end{center}

with everything outside the picture being the same. Note $\tau = m \circ \Delta^{-1}$.

\bigskip

Note $C/C'$ has a subcomplex 

\begin{center}$C'':$ 
$\left[ \left[ \includegraphics[height = .3 in,  trim = 0 70 0 0]{fig6.pdf} \right] \right] \rightarrow \left[ \left[\includegraphics[height = .3 in,  trim = 0 70 0 0]{fig8.pdf} \right] \right] \oplus \tau \left(\left[ \left[ \includegraphics[height = .3 in,  trim = 0 70 0 0]{fig8.pdf} \right] \right] \right) \rightarrow 0 $ \end{center}

\medskip

where the second term in the complex is elements of the form $(\beta,\tau(\beta))$.

\bigskip

$C''$ is acyclic since the non-zero map is onto the first factor and by construction of $\tau$ (since $\Delta$ is a bijection) is onto the second factor as well.

\bigskip

Modding out by $C''$ results in the complex with the disk being identified with elements in the complex without the disk, so we have

\begin{center}$(C/C')/C''$  is 0 $\rightarrow \left[ \left[ \includegraphics[height = .2 in,  trim = 0 70 0 0]{fig91.pdf} \right] \right] \rightarrow 0$,  \end{center}

which is the same complex that appears if a Reidemeister II move was performed on the original diagram.  By Theorem \ref{bn}, the homology of the original complex is equivalent to the homology produced by $(C/C')/C''$ and thus it is unchanged by a Reidemeister II move.

\subsection{Reidemeister III}

Consider the two cubes that come from the Reidemeister three move, $C_A$ and $C_B$, as shown in figure \ref{R3}.
 
\medskip

\begin{figure}[h!]
\scalebox{.85}{
\begin{tabular}{cc}
\includegraphics[height = 3 in]{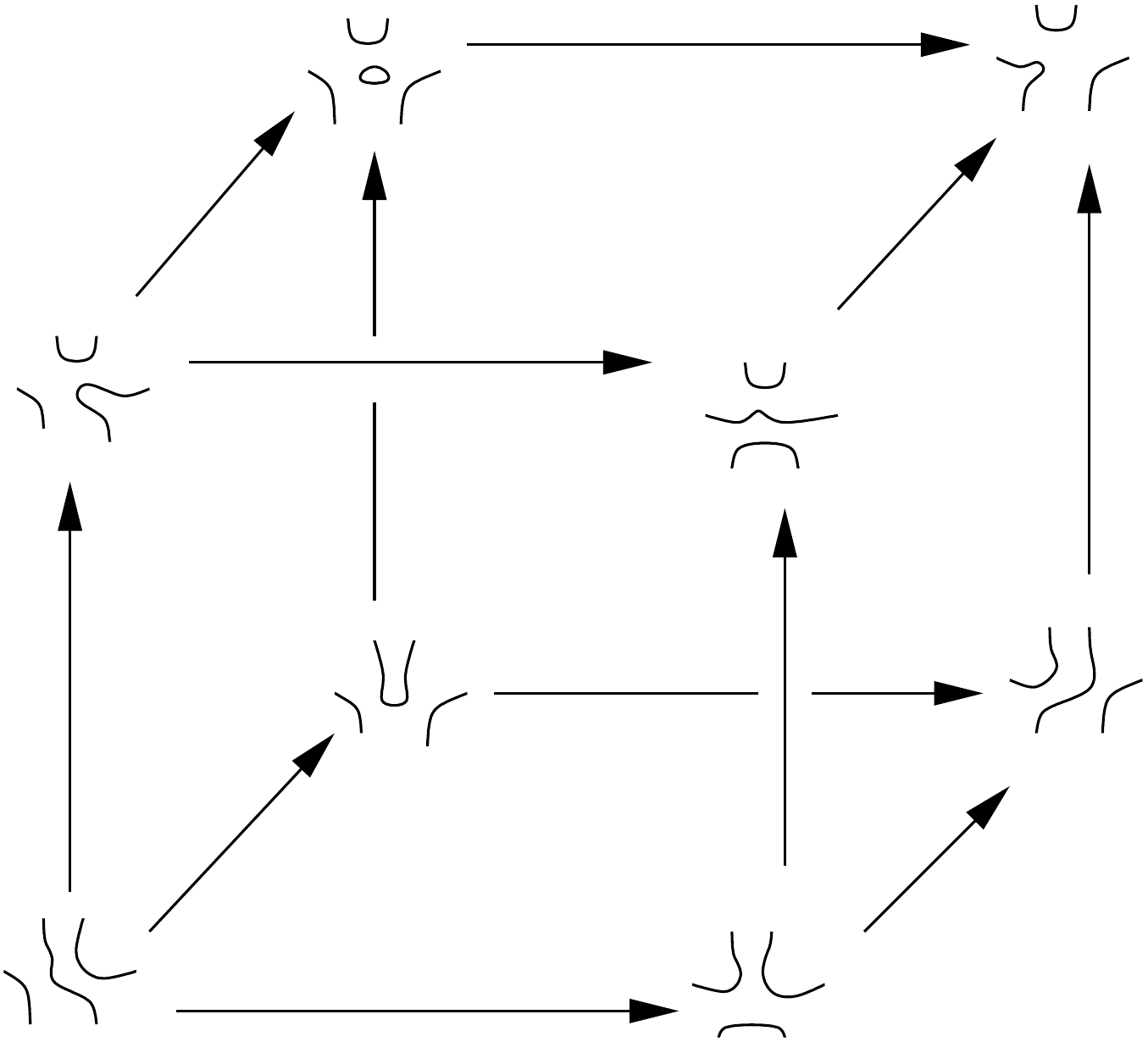}  &   \includegraphics[height = 3 in]{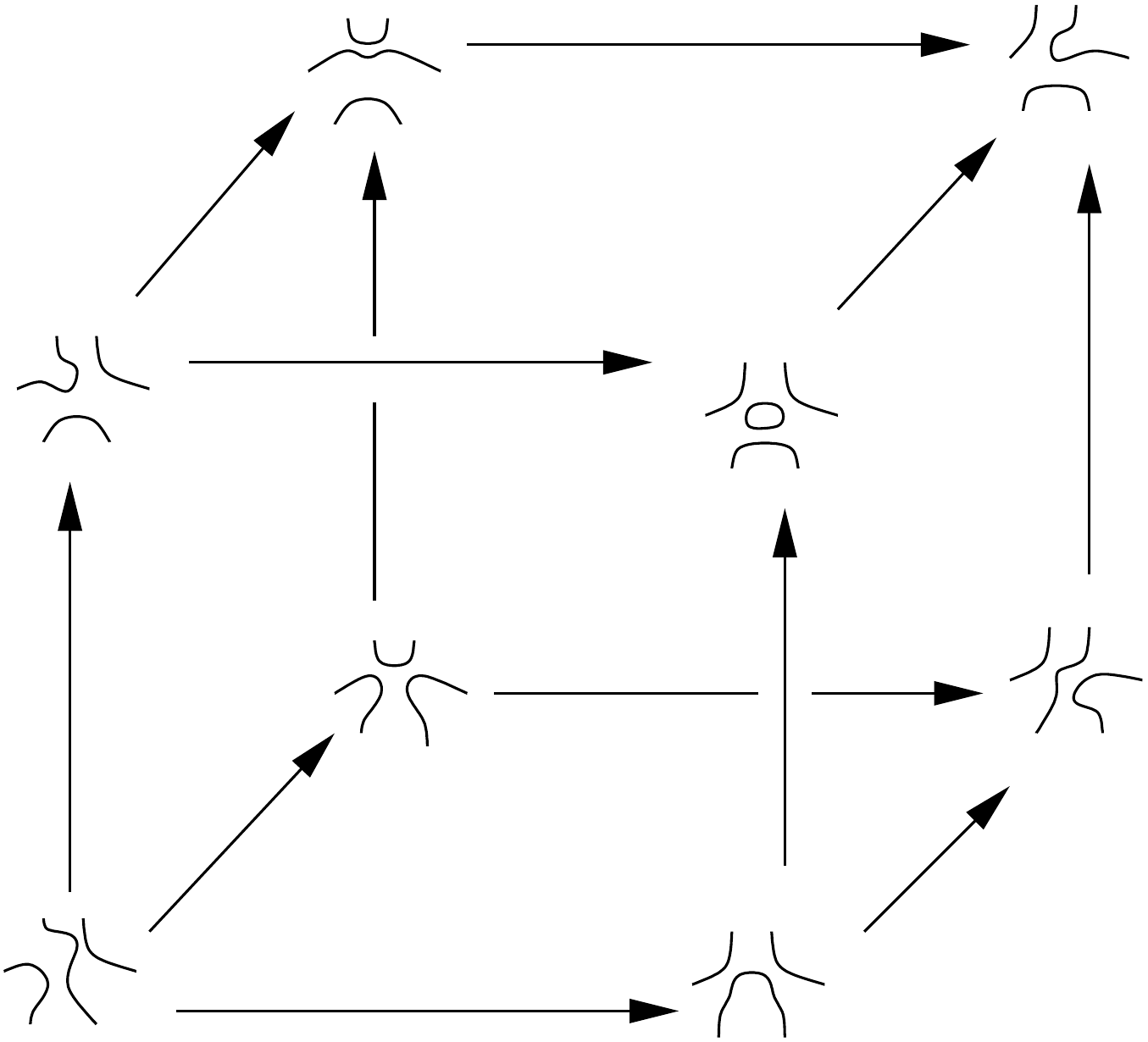} \\
$C_A$  &  $C_B$  \\
\end{tabular}
}
\caption{The complexes coming from the Reidemeister three move. \label{R3}}
\end{figure}

Notice the two subcomplexes, $C_A'$ and $C_B'$ in Figure \ref{R3C'}, where the disk never has a dot, are acyclic as in the Reidemeister two case.  Now consider the complexes $C_A/C_A'$ and  $C_B/C_B'$ in Figure \ref{R3C/C'}, where the disk always has a dot.

\begin{figure}[h!]
\scalebox{.85}{
\begin{tabular}{cc}

\includegraphics[height = 3 in]{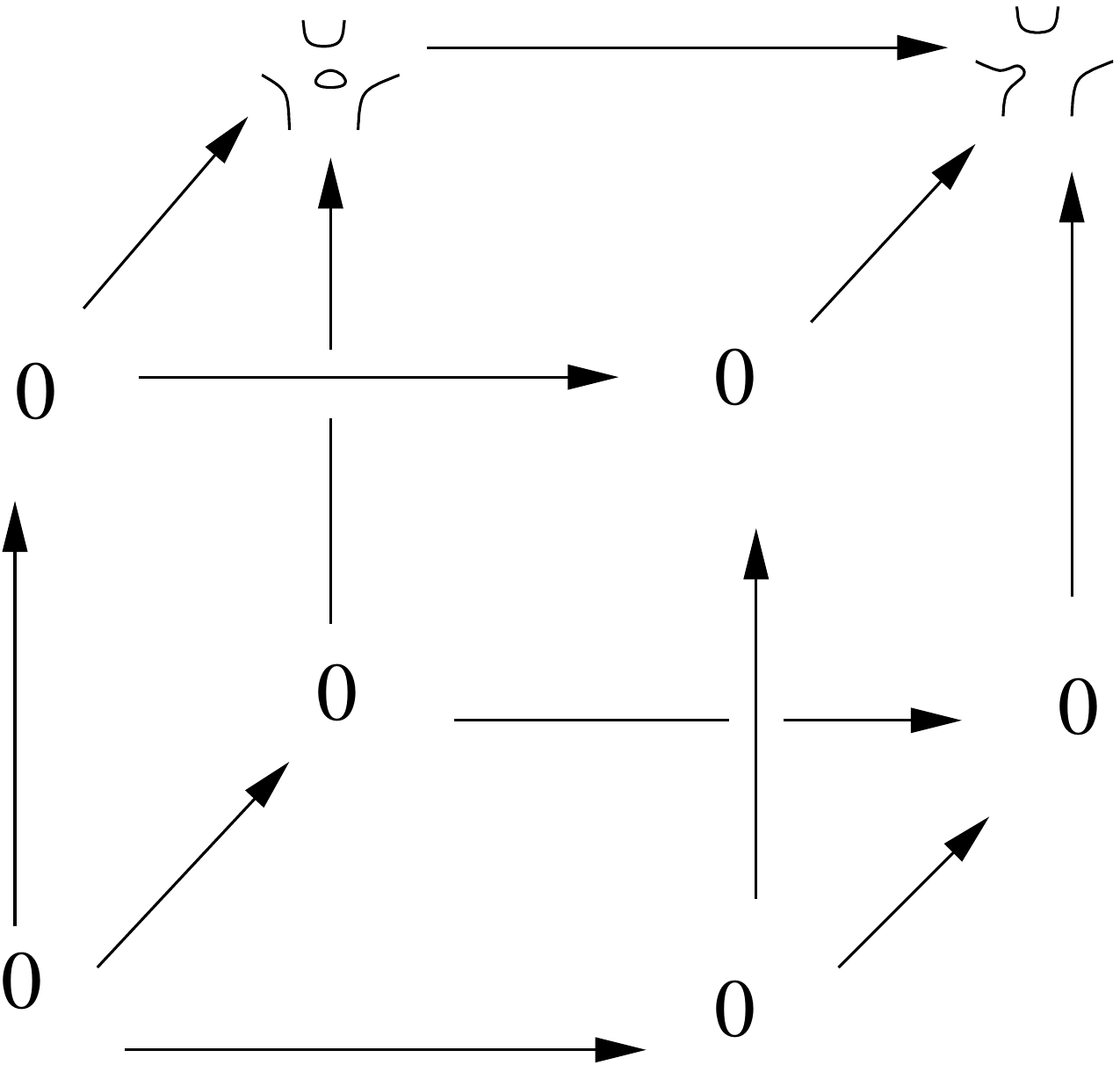}\put(-150,190){/\includegraphics[height = .1 in]{diskdot.pdf}}   &  \includegraphics[height = 3 in]{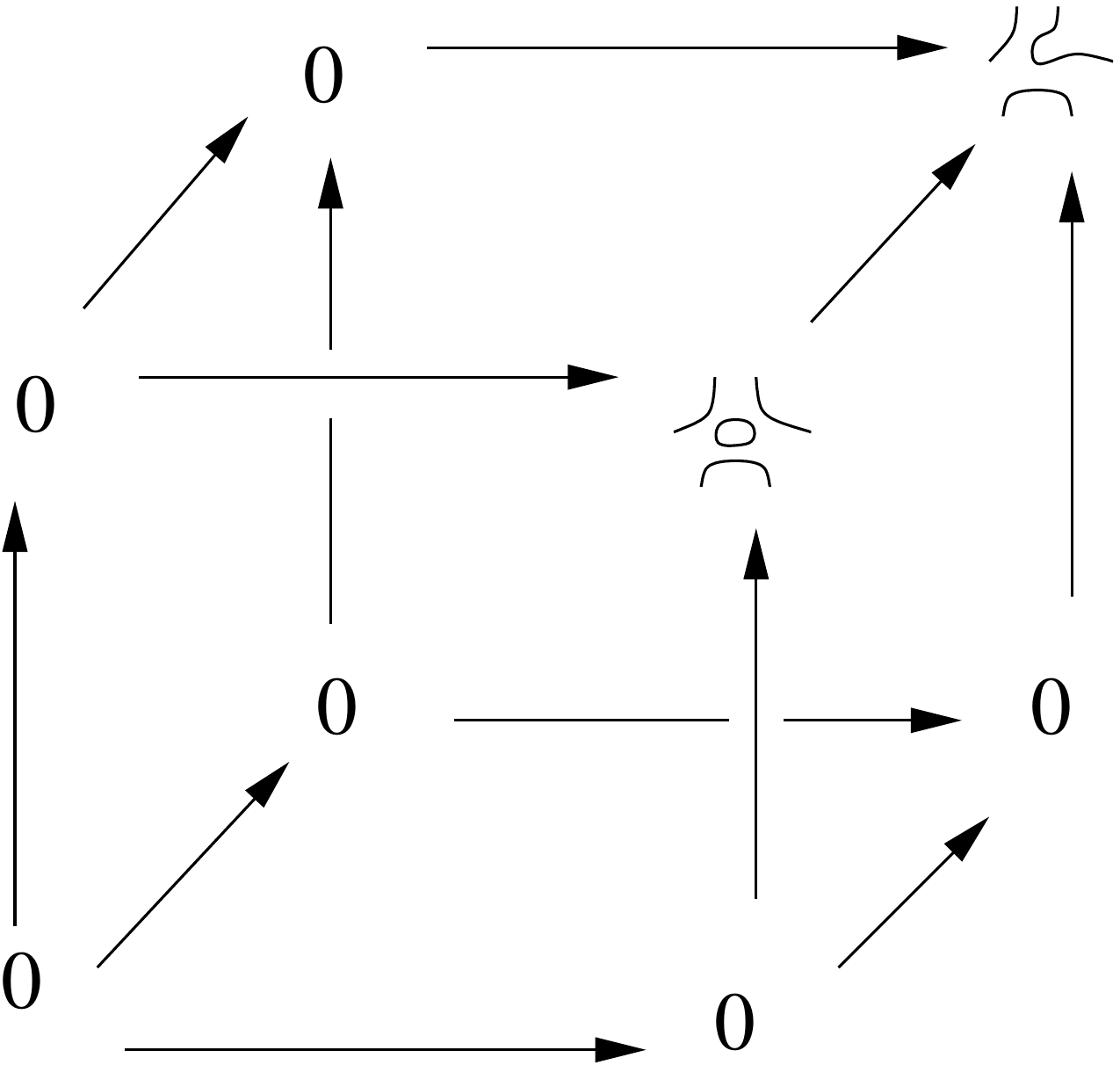}\put(-60,120){/\includegraphics[height = .1 in]{diskdot.pdf}} \\
$C_A'$  &  $C_B'$  \\
\end{tabular}
}
\caption{The acyclic subcomplexes $C_A'$ and $C_B'$.      \label{R3C'}}
\end{figure}

\begin{figure}[h!]
\scalebox{.85}{
\begin{tabular}{ccc}
\includegraphics[height = 3 in]{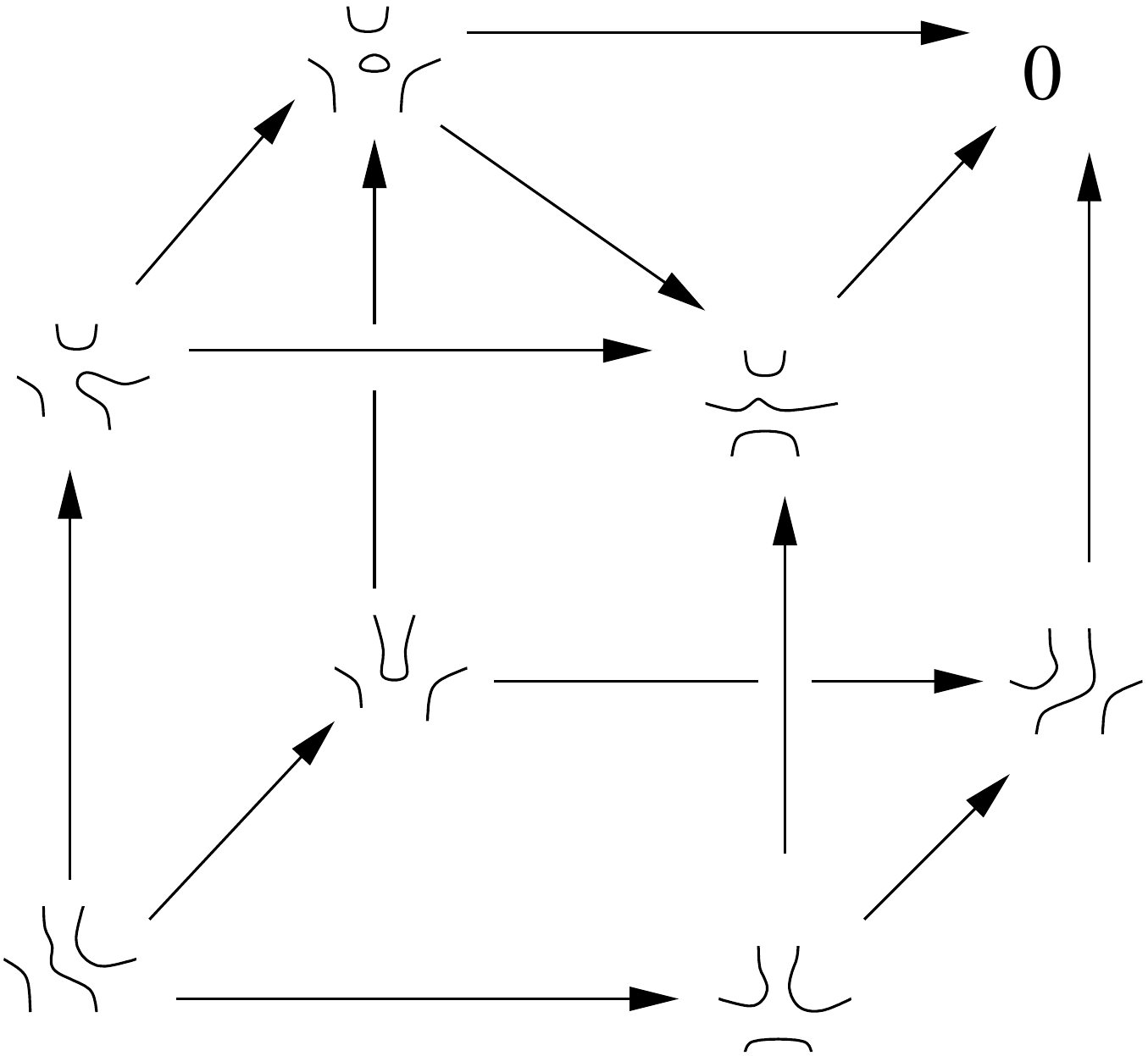}\put(-135,170){$\tau$}   &  \includegraphics[height = 3 in]{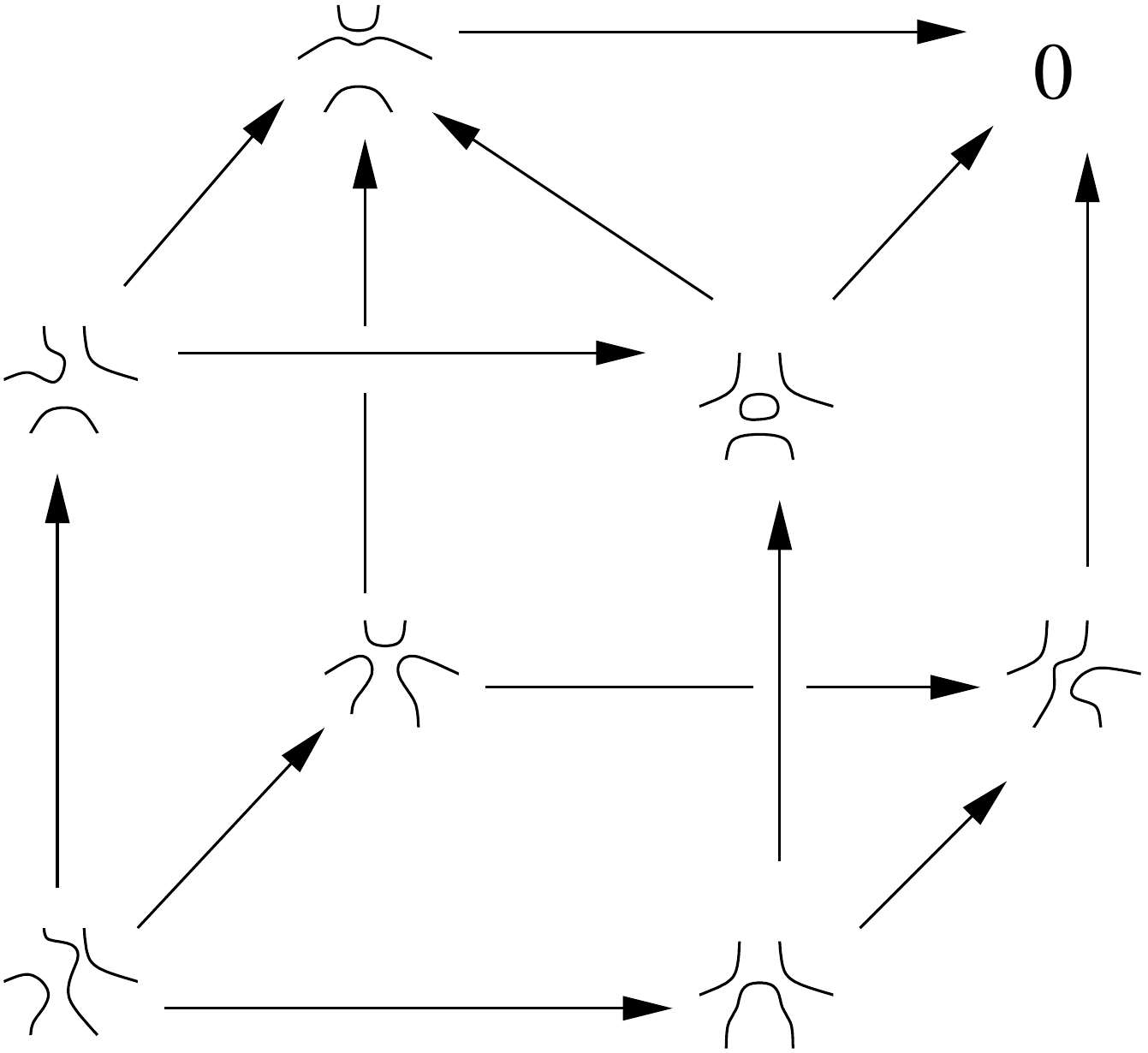}\put(-135,170){$\tau$} \\

$C_A/C_A'$ &  $C_B/C_B'$

\end{tabular}
}
\caption{The complexes $C_A/C_A'$ and  $C_B/C_B'$.  \label{R3C/C'} }
\end{figure}

\medskip
Let $\tau$ be the same map as defined in the Reidemeister two case.  Then we can consider the acyclic subcomplexes of the previous cubes, $C_A''\subset C_A/C_A' $  and $C_B''\subset C_B/C_B'$ as shown in Figure \ref{R3C''}.  The complexes are acylic because $\Delta$ is an isomophism in this complex so the only non-zero map is one-to-one and onto.

\medskip

\begin{figure}[h!]
\scalebox{.85}{
\begin{tabular}{ccc}
\includegraphics[height = 3 in]{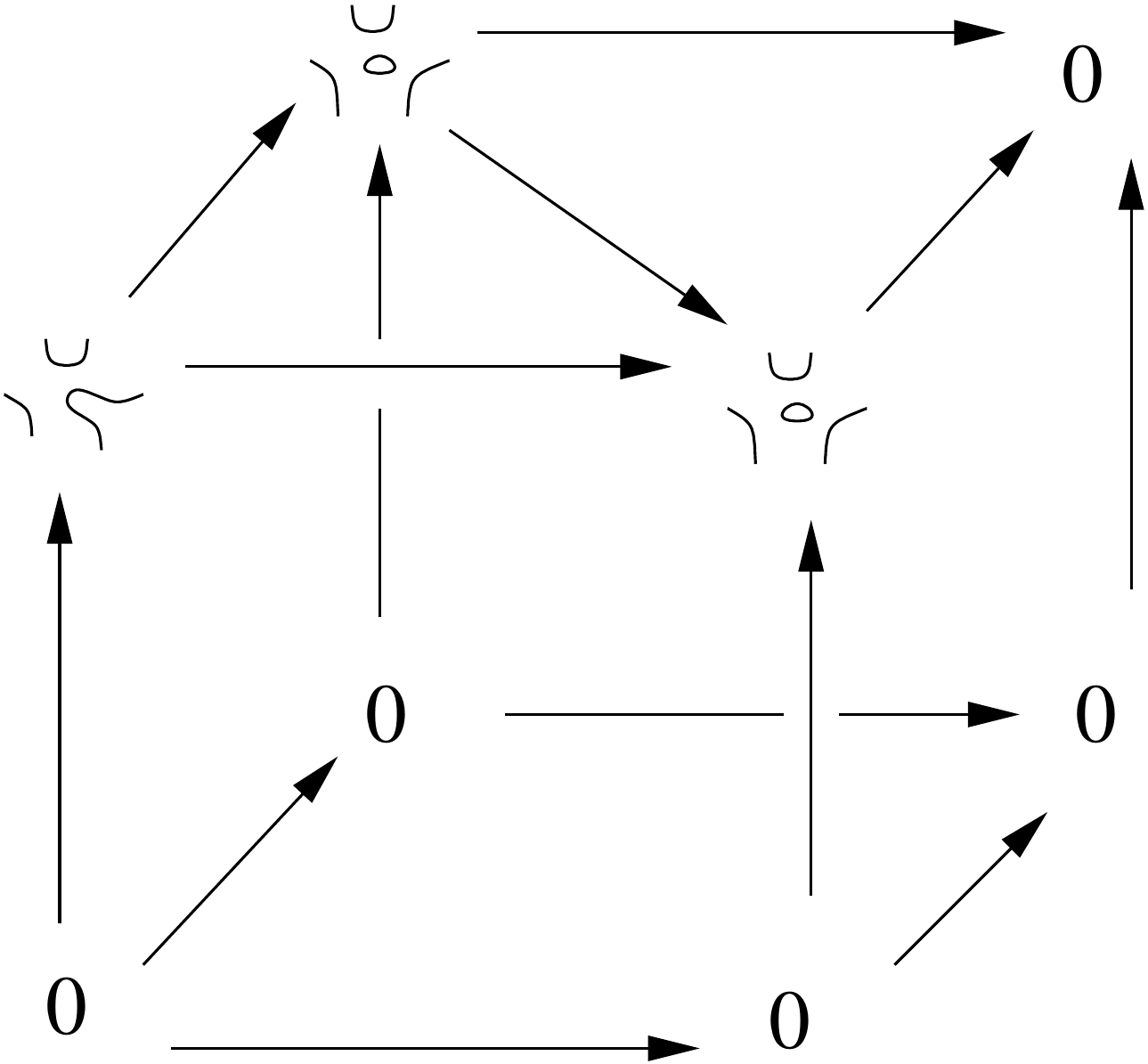}\put(-210,180){$\Delta$}\put(-150,130){$m$}\put(-135,170){$\tau$}\put(-100,132){$\tau($\hspace{.5in}$)$}    &  \includegraphics[height = 3 in]{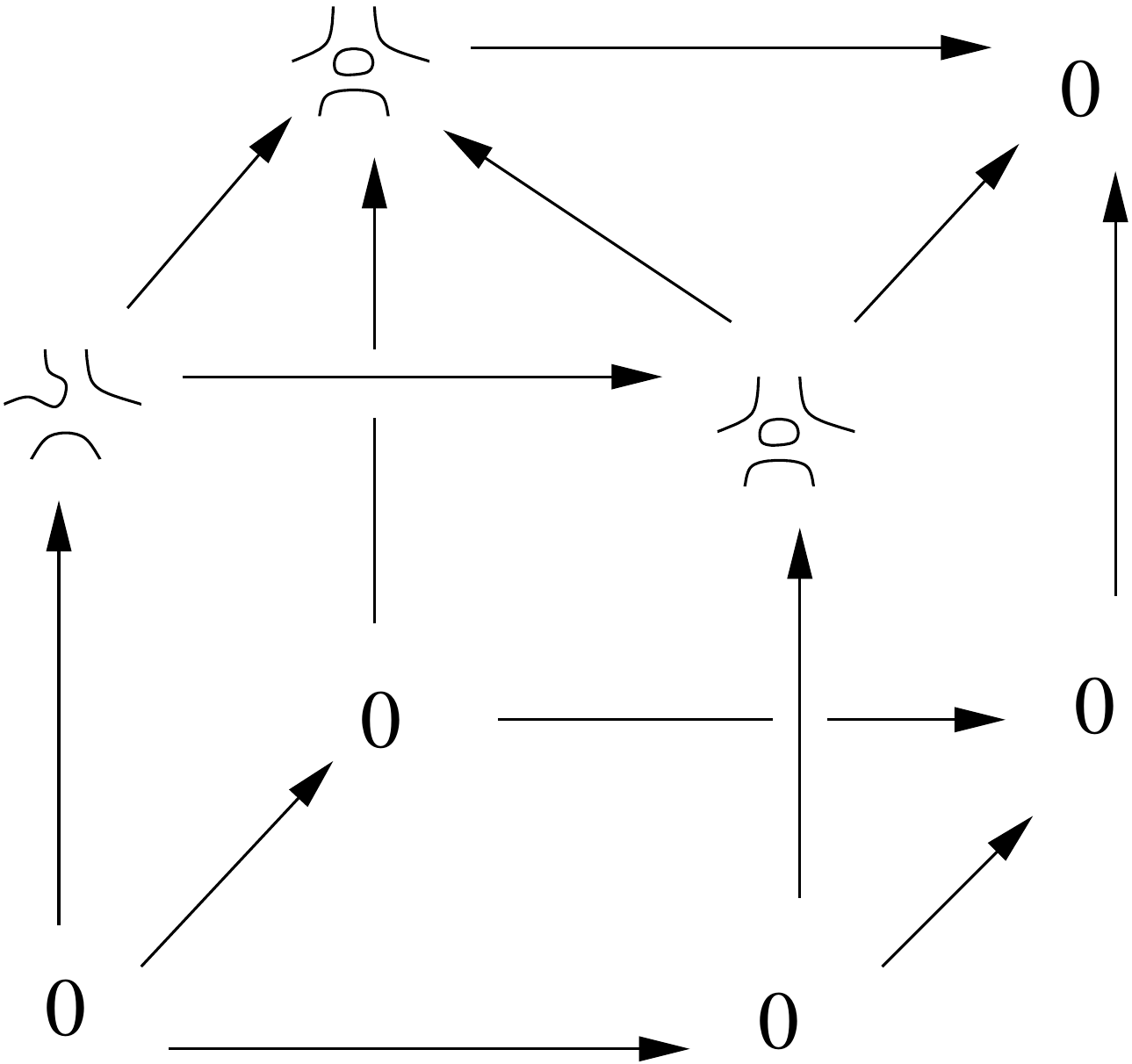}\put(-135,170){$\tau$}\put(-180,200){$\tau($\hspace{.36in}$)$} \\

$C_A''\subset C_A/C_A' $  & $C_B''\subset C_B/C_B'$\\
\end{tabular}
}
\caption{ The complexes $C_A''$ and $C_B''$.      \label{R3C''}}
\end{figure}

\medskip

Finally we arrive at $ (C_A/C_A')/C_A'' $ and $ (C_B/C_B')/C_B''$ in Figure \ref{R3final}.  Note that when we mod out by $C_A''$ and $C_B''$ we essentially set $\beta = \tau (\beta)$.

\begin{figure}[h!]
\scalebox{.85}{
\begin{tabular}{ccc}

\includegraphics[height = 3 in]{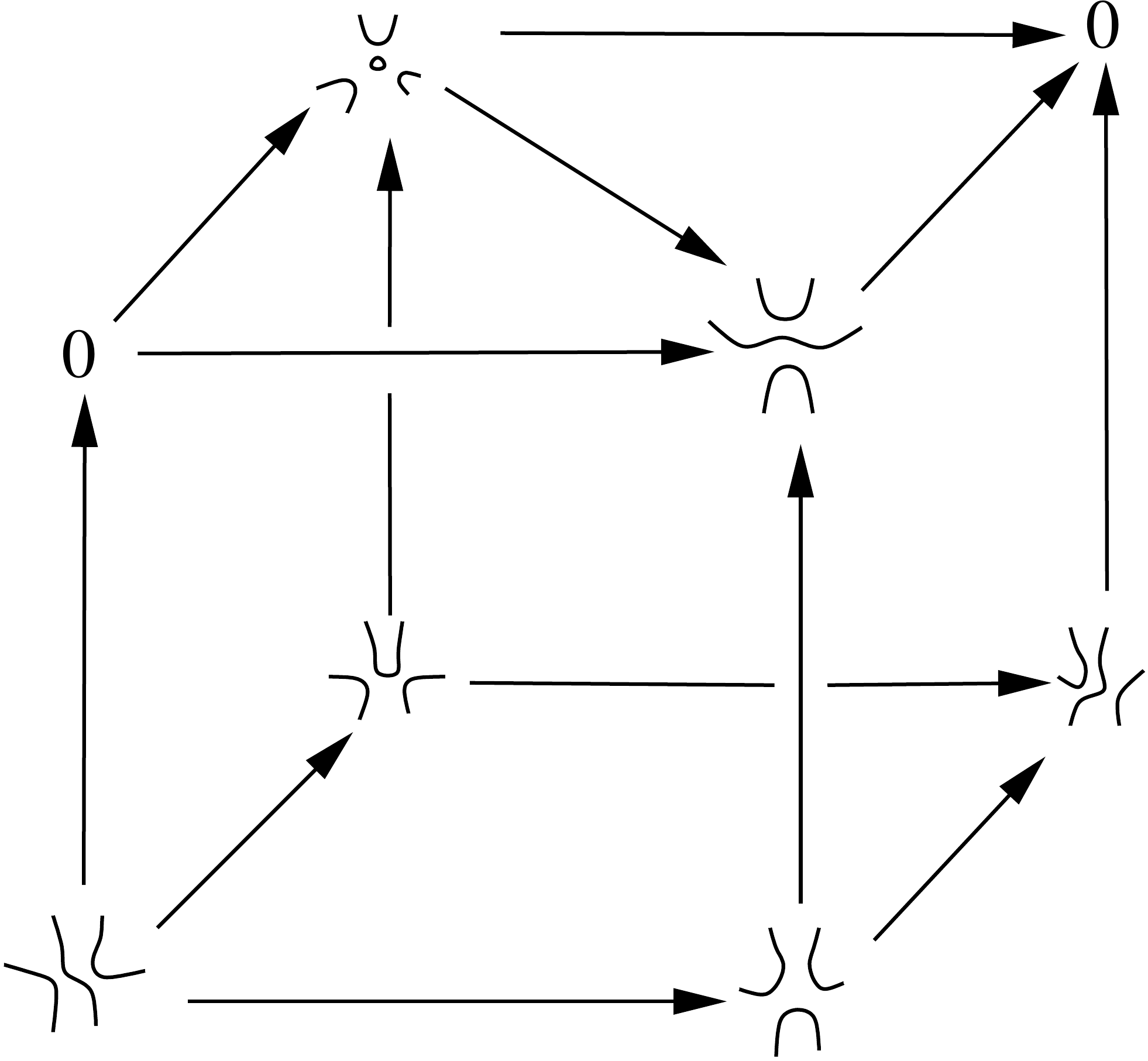}\put(-250,80){$d_{0,0,*}^A$}\put(-180,40){$d_{*,0,0}^A$}\put(-150,20){$d_{0,*,0}^A$}\put(-150,150){$d_{0,*,1}^A$}\put(-65,105){$d_{0,1,*}^A$} \put(-25,45){$d_{*,1,0}^A$}\put(0,150){$d_{1,1,*}^A$}\put(-100,220){$d_{1,*,1}^A$}\put(-210,180){$d_{*,0,1}^A$}\put(-40,165){$d_{*,1,1}^A$}\put(-180,105){$d_{1,0,*}^A$}\put(-110,80){$d_{1,*,0}^A$}\put(-135,175){$\tau^A$} &  & \includegraphics[height = 3 in]{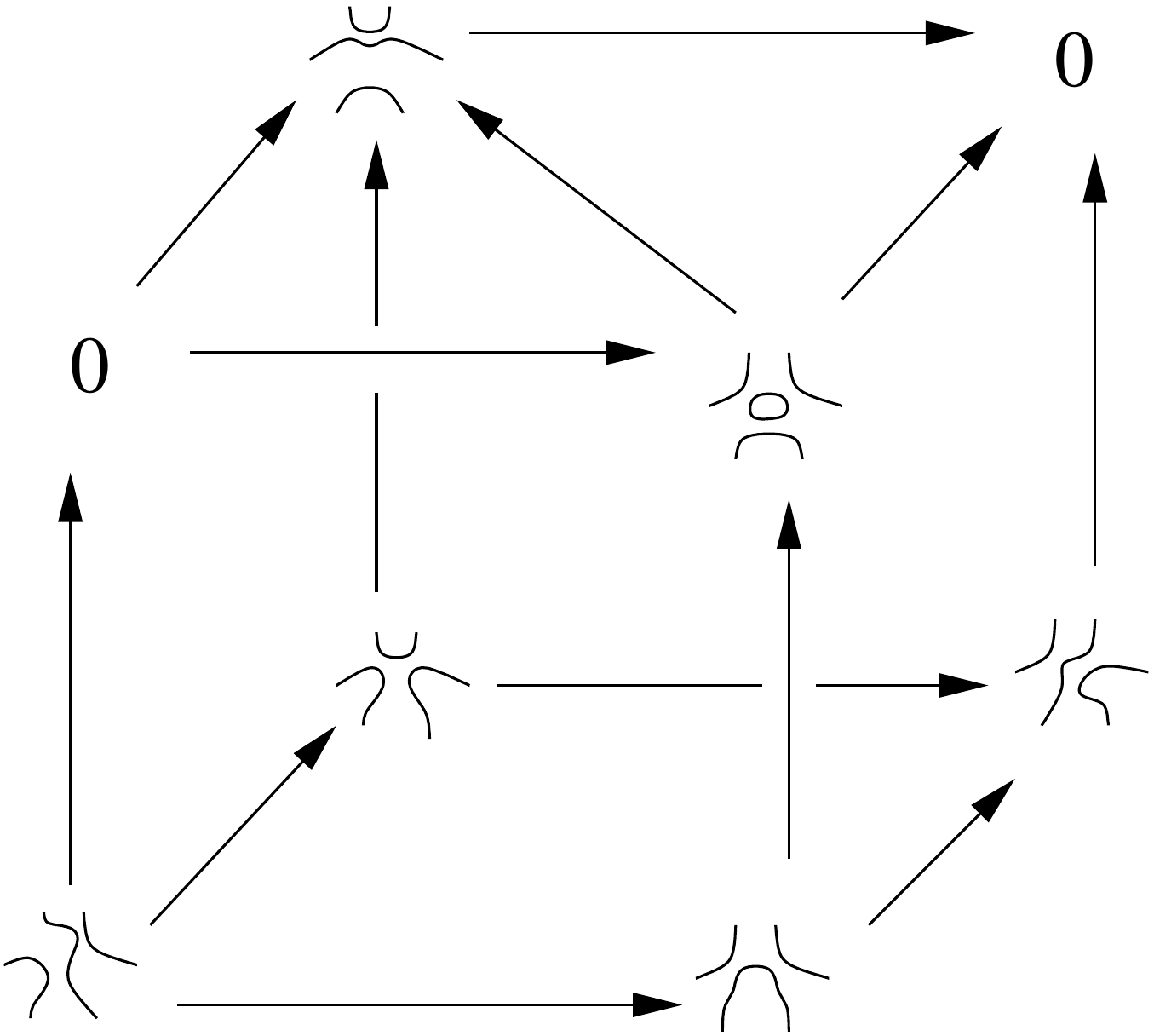}\put(-250,80){$d_{0,0,*}^B$}\put(-180,40){$d_{*,0,0}^B$}\put(-150,20){$d_{0,*,0}^B$}\put(-150,150){$d_{0,*,1}^B$}\put(-65,105){$d_{0,1,*}^B$} \put(-25,45){$d_{*,1,0}^B$}\put(0,150){$d_{1,1,*}^B$}\put(-100,220){$d_{1,*,1}^B$}\put(-215,180){$d_{*,0,1}^B$}\put(-40,165){$d_{*,1,1}^B$}\put(-190,105){$d_{1,0,*}^B$}\put(-110,80){$d_{1,*,0}^B$}\put(-115,175){$\tau^B$} \\

$ (C_A/C_A')/C_A'' $  & &  $ (C_B/C_B')/C_B''$ \\

\end{tabular}
}
\caption{The complexes $ (C_A/C_A')/C_A'' $ and  $ (C_B/C_B')/C_B''$.        \label{R3final}}
\end{figure}

\medskip

We will define $\Upsilon$ as in [BN2],  $\Upsilon$ sends the bottom layer of $ (C_A/C_A')/C_A'' $ to the bottom layer of $ (C_B/C_B')/C_B'' $.  $\Upsilon$ does almost the same to the top of the cubes, except that the top layers are reversed so that the foams with the closed component locally are mapped to each other.  We must now verify that $d \circ \Upsilon = \Upsilon \circ d$ so that $\Upsilon$ is a chain map.  Since the tops and bottoms are immediately isomorphic, the only thing left to show is that $\tau^A \circ d_{1,0,*}^A=d_{1,0,*}^B$ and $\tau^B \circ d_{0,1,*}^B=d_{0,1,*}^A$.  

\medskip

Note 

\begin{equation*}\tau^A \circ d_{1,0,*}^A \left( \includegraphics[height = .3 in, trim = 0 70 0 0]{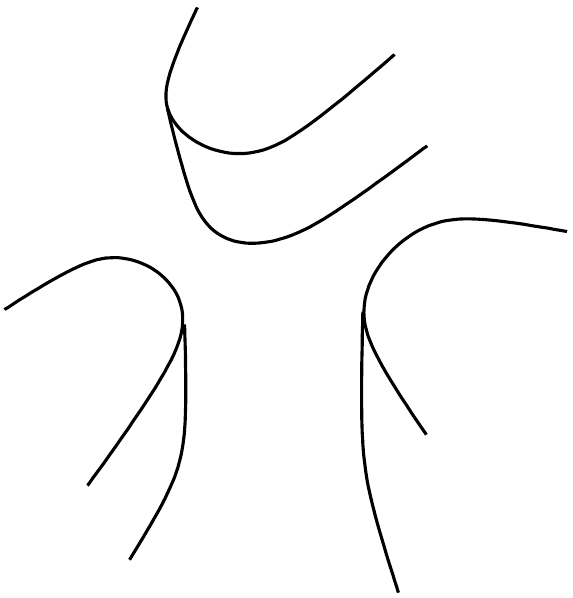} \right) = \tau^A \left( \includegraphics[height = .3 in, trim = 0 70 0 0]{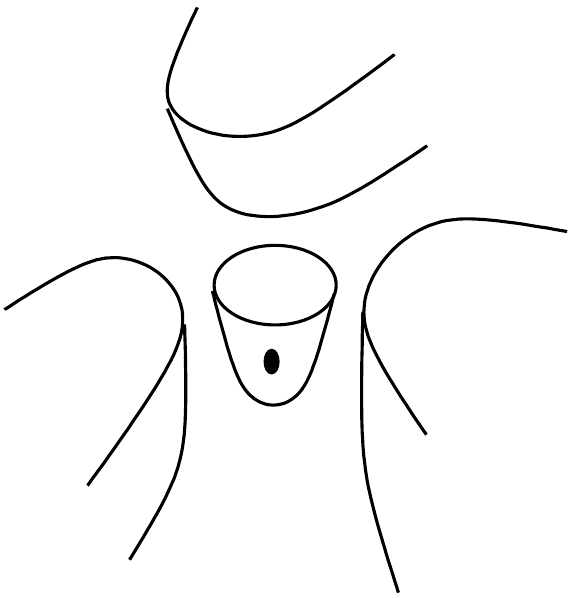} \right) = \includegraphics[height = .2 in, trim = 0 70 0 0]{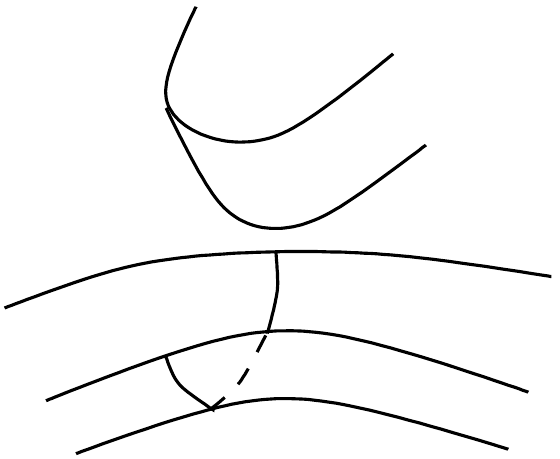}\end{equation*}

and,

\begin{equation*}d_{1,0,*}^B \left( \includegraphics[height = .3 in, trim = 0 70 0 0]{taumap1.pdf} \right) = \includegraphics[height = .2 in, trim = 0 70 0 0]{taumap3.pdf},\end{equation*}

so we have $\tau^A \circ d_{1,0,*}^A=d_{1,0,*}^B$.  Showing that  $\tau^B \circ d_{0,1,*}^B=d_{0,1,*}^A$ is done in the exact same manner, thus we have that $\Upsilon$ is a chain map.

\begin{center}
\begin{tabular}{cccccc}

Bottom$_A$ & $\rightarrow$ & Top$_A$ & & \\

$\downarrow \cong \uparrow$ & $\circlearrowright$ & $\downarrow \cong \uparrow$ \\

Bottom$_B$ & $\rightarrow$& Top$_B$ \\

\end{tabular}\put(-137,0){$\Upsilon$}\put(-30,0){$\Upsilon$}
\end{center}

Since the diagram commutes and both maps are isomorphisms the chain complexes are isomorphic.  These chain complexes produce the same homology as $C_A$ and $C_B$ respectively since they are quotients of $C_A$ and $C_B$ by acyclic complexes.  Thus $C_A$ and $C_B$ produce the same homology and the theory is invariant under the third Reidemeister move.  This proves

\begin{theorem}

Both homology theories that have been defined are invariants of framed links and $H_{i,j,s}\left(\includegraphics[height=.3in, trim=0 70 0 0]{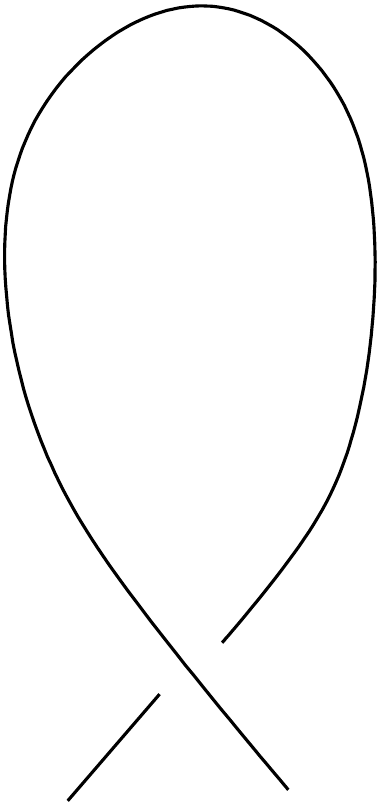}\right) = H_{i-1,j-3,s}\left(\includegraphics[height=.3in, trim=0 55 0 0]{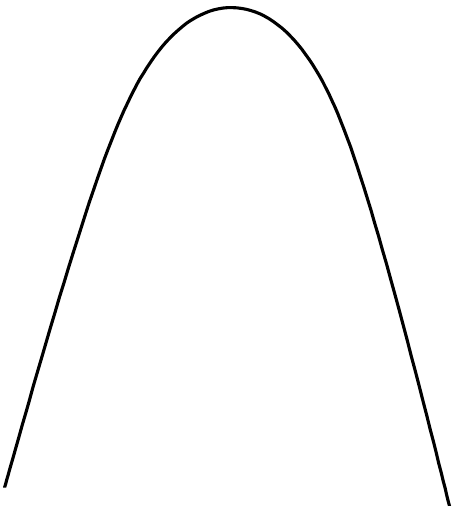}\right)$.

\end{theorem}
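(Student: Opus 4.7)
The plan is to prove invariance under Reidemeister moves II and III and track the precise degree shift under a positive Reidemeister I twist, using Bar-Natan's acyclic subcomplex method (Theorem \ref{bn}) applied uniformly to both homology theories. The strategy in every case is: exhibit a short chain of acyclic subcomplexes $C' \subset C$ and $C'' \subset C/C'$ whose quotients identify the complex of the ``before'' diagram with the complex of the ``after'' diagram.

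For the positive twist, I would take $C$ to be the two-term complex arising from the twisted tangle and identify $C'$ as the subcomplex in which the small disk bounding the twist is never dotted. The only non-zero map in $C'$ is the $\Delta$-style map to the neighboring state, which is an isomorphism of free modules, so $C'$ is acyclic. The quotient $C/C'$ is a one-term complex in which this disk always carries a dot, and as a complex of foams it is canonically identified with the complex $D$ of the untwisted diagram, with an extra dotted disk and an extra positively-smoothed crossing attached. The main bookkeeping is the degree translation: for a foam $S$ in $D$ and its corresponding $S'$ in $C/C'$, the extra positive smoothing gives $I(S') = I(S) + 1$, the extra dot gives $d(S') = d(S) + 1$, the extra disk gives $\chi(S') = \chi(S) + 1$, and $K$ is unchanged. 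Substituting into $J(S') = I(S') + 2(2d(S') - \chi(S'))$ yields $J(S') = J(S) + 3$, which gives the claimed shift $H_{i,j,s}(\text{twist}) \cong H_{i-1,j-3,s}(\text{no twist})$.

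For Reidemeister II, the same first step passes to $C/C'$ in which the cancelling disk is dotted, and then I would set up Bar-Natan's map $\tau = m \circ \Delta^{-1}$ and form a second subcomplex $C'' \subset C/C'$ of paired elements $(\beta, \tau(\beta))$. Acyclicity of $C''$ follows because $\Delta$ is an isomorphism on the relevant summand; the double quotient $(C/C')/C''$ then collapses to a single-term complex canonically identified with the complex of the R-II-reduced diagram. For Reidemeister III the same two-stage reduction is performed for each of the two cubes $C_A$ and $C_B$ coming from the two sides of the move. The main obstacle will be verifying that the candidate chain map $\Upsilon$ between the reduced cubes $(C_A/C_A')/C_A''$ and $(C_B/C_B')/C_B''$ actually commutes with the differentials. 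After sending bottom layers by the obvious isomorphism and top layers by the involution that swaps the foams with the locally-closed component, this reduces to two local identities of the form $\tau^A \circ d^A_{1,0,*} = d^B_{1,0,*}$ and its mirror, which can be checked pictorially on a single generating foam.

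Throughout, the entire argument applies uniformly to both theories, since the acyclic subcomplexes and the map $\tau$ are defined by local conditions on a dotted disk which are compatible with both the $F$-orientation data of $k$-foams (Theorem \ref{bopwelldef}, Lemma \ref{ncwelldef}) and the TT, NEC, IT relations of simple foams. Once the $\Upsilon$-step is completed, invariance follows: $(C_A/C_A')/C_A''$ and $(C_B/C_B')/C_B''$ are isomorphic as chain complexes, and each has the same homology as $C_A$ and $C_B$ by two successive applications of Theorem \ref{bn}.
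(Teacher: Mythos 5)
Your proposal follows the paper's argument essentially verbatim: the same acyclic subcomplexes $C'$ and $C''$, the same map $\tau = m \circ \Delta^{-1}$, the same degree computation giving the $I$-shift by $1$ and $J$-shift by $3$ under the positive twist, and the same reduction of the Reidemeister III check to the local identities $\tau^A \circ d^A_{1,0,*} = d^B_{1,0,*}$ and its mirror. The only slip is calling the nonzero map in the R-I subcomplex $C'$ a ``$\Delta$-style'' map; since the disk is absorbed in passing from the positive to the negative smoothing it is a merge ($m$-style), though the acyclicity argument is unaffected because the undotted disk acts as a unit either way.
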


Department of Mathematics, University of Iowa, Iowa City, IA 52245, USA

E-mail: jboerner@math.uiowa.edu

\end{document}